\PassOptionsToPackage{nameinlink}{cleveref}
\documentclass[a4paper,UKenglish,cleveref]{lipics-v2019}
%This is a template for producing LIPIcs articles. 
%See lipics-manual.pdf for further information.
%for A4 paper format use option "a4paper", for US-letter use option "letterpaper"
%for british hyphenation rules use option "UKenglish", for american hyphenation rules use option "USenglish"
%for section-numbered lemmas etc., use "numberwithinsect"
%for enabling cleveref support, use "cleveref"
%for enabling cleveref support, use "autoref"

%\PassOptionsToPackage{nameinlink}{cleveref}

\usepackage{tikz}
\usetikzlibrary{arrows,shapes,positioning}

\Crefname{theorem}{Theorem}{Theorems}
\Crefname{definition}{Definition}{Definitions}

\hypersetup{
    colorlinks=true,
    linkcolor=blue,
    filecolor=magenta,      
    urlcolor=cyan,
}

\nolinenumbers

%\graphicspath{{./graphics/}}%helpful if your graphic files are in another directory

\bibliographystyle{plainurl}% the mandatory bibstyle

\title{Simple Analysis of Johnson-Lindenstrauss Transform under Neuroscience Constraints} %TODO Please add

\titlerunning{Dummy short title}%optional, please use if title is longer than one line

\author{Maciej Skorski}{University of Luxembourg}{}{}{}%TODO mandatory, please use full name; only 1 author per \author macro; first two parameters are mandatory, other parameters can be empty. Please provide at least the name of the affiliation and the country. The full address is optional

\authorrunning{M. Skorski}%TODO mandatory. First: Use abbreviated first/middle names. Second (only in severe cases): Use first author plus 'et al.'

\Copyright{M. Skorski}%TODO mandatory, please use full first names. LIPIcs license is "CC-BY";  http://creativecommons.org/licenses/by/3.0/

%2012 ACM Subject Classification  
\ccsdesc[100]{Theory of computation→ Random projections and metric embeddings}
%\ccsdesc[100]{General and reference}%TODO mandatory: Please choose ACM 2012 classifications from https://dl.acm.org/ccs/ccs_flat.cfm 

\keywords{Dimensionality reduction, Random projections, Johnson-Lidenstrauss Lemma, Neuroscience-based constraints}%TODO mandatory; please add comma-separated list of keywords

\category{}%optional, e.g. invited paper

\relatedversion{}%optional, e.g. full version hosted on arXiv, HAL, or other respository/website
%\relatedversion{A full version of the paper is available at \url{...}.}

\supplement{}%optional, e.g. related research data, source code, ... hosted on a repository like zenodo, figshare, GitHub, ...

%\funding{(Optional) general funding statement \dots}%optional, to capture a funding statement, which applies to all authors. Please enter author specific funding statements as fifth argument of the \author macro.

\acknowledgements{ }%optional

%\nolinenumbers %uncomment to disable line numbering

%\hideLIPIcs  %uncomment to remove references to LIPIcs series (logo, DOI, ...), e.g. when preparing a pre-final version to be uploaded to arXiv or another public repository

%Editor-only macros:: begin (do not touch as author)%%%%%%%%%%%%%%%%%%%%%%%%%%%%%%%%%%
\EventEditors{John Q. Open and Joan R. Access}
\EventNoEds{1}
\EventLongTitle{42nd Conference on Very Important Topics (CVIT 2016)}
\EventShortTitle{.}
\EventAcronym{CVIT}
\EventYear{2016}
\EventDate{December 24--27, 2016}
\EventLocation{Little Whinging, United Kingdom}
\EventLogo{}
\SeriesVolume{}
\ArticleNo{}
%%%%%%%%%%%%%%%%%%%%%%%%%%%%%%%%%%%%%%%%%%%%%%%%%%%%%%

\begin{document}

\maketitle

%TODO mandatory: add short abstract of the document
\begin{abstract}
The paper re-analyzes a version of the celebrated Johnson-Lindenstrauss Lemma, in which matrices are subjected to constraints
that naturally emerge from neuroscience applications: a) sparsity and b) sign-consistency.
This particular variant was studied first by Allen-Zhu, Gelashvili, Micali, Shavit and more recently by Jagadeesan (RANDOM'19).

The contribution of this work is a novel proof, which in contrast to previous works a) uses the modern probability toolkit, particularly basics of sub-gaussian and sub-gamma estimates b) is self-contained, with no dependencies on subtle third-party results c) offers explicit constants.

At the heart of our proof is a novel variant of Hanson-Wright Lemma (on concentration of quadratic forms). Of independent interest are also auxiliary facts on sub-gaussian random variables.
\end{abstract}

\section{Introduction}

\subsection{Johnson-Lindenstrauss Transform}

The point of departure for our discussion is the celebrated result due to Johnson and Lindenstrauss~\cite{johnson1984extensions}, which shows that any high-dimensional data can be compressed to a much lower dimension, while almost preserving the original geometry (distances).

The JL Lemma is widely used in data analysis as a preprocessing step, to reduce the size of data to be feed into algorithms.
Over the years it has found numerous applications across many different fields, for example in streaming and search algorithms~~\cite{alon1999space,ailon2009fast}, fast approximation algorithms for statistical and linear algebra~\cite{clarkson2009numerical,clarkson2008tighter,sarlos2006improved}, algorithms for computational biology~\cite{bertoni2005random} and even privacy~\cite{blocki2012johnson}. Formally, the lemma can be stated as follows
\begin{lemma}[JL Lemma~\cite{johnson1984extensions}]
For every integers $m,n>1$, subset $\mathcal{X}\subset\mathbb{R}^{m}$ of cardinality $\#\mathcal{X} = n$, and $\epsilon \in (0,1)$ there exists a $d\times m$ real matrix $A$, with $d = O(\epsilon^{-2}\log n)$, such that
\begin{align}\label{eq:jl}
\forall x,x' \in \mathcal{X}:\quad (1-\epsilon)\|x-x'\|_2 \leqslant \| Ax - Ax'\|_2 \leqslant (1+\epsilon)\|x-x'\|_2.
\end{align}  
\end{lemma}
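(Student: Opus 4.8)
The plan is to prove the statement by the \emph{probabilistic method}: rather than exhibiting a matrix $A$ explicitly, I would draw $A$ at random from a suitable distribution and argue that it satisfies~\eqref{eq:jl} with strictly positive probability, whence a good $A$ must exist. Concretely, I would take $A$ to be a $d \times m$ matrix whose entries are independent, mean-zero, appropriately normalized (sub-)gaussian variables, scaled so that $\mathbb{E}\|Au\|_2^2 = \|u\|_2^2$ for every $u$. Because the map $x \mapsto Ax$ is linear, preserving~\eqref{eq:jl} for all pairs $x, x' \in \mathcal{X}$ amounts to controlling the ratio $\|Au\|_2 / \|u\|_2$ simultaneously for the $\binom{n}{2}$ difference vectors $u = x - x'$. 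This reduces the whole problem to a single-vector concentration estimate combined with a union bound.

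The heart of the argument is the \emph{distributional} Johnson--Lindenstrauss statement: for a \emph{fixed} unit vector $u \in \mathbb{R}^m$,
\begin{equation*}
\Pr\!\left[\, \bigl| \|Au\|_2^2 - 1 \bigr| > \epsilon \,\right] \leqslant 2\exp\!\left(-c\, d\, \epsilon^2\right)
\end{equation*}
for an absolute constant $c > 0$. To prove it I would write $\|Au\|_2^2 = \sum_{i=1}^{d} Y_i$, where $Y_i = (A_i \cdot u)^2$ and $A_i$ denotes the $i$-th row; each inner product $A_i \cdot u$ is a normalized sub-gaussian variable, so its square $Y_i$ is \emph{sub-gamma} (equivalently sub-exponential) with mean $1/d$. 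The sum of the independent centered $Y_i$ then obeys a Bernstein-type tail, which in the relevant regime $\epsilon \in (0,1)$ is dominated by its sub-gaussian part and yields exactly the exponent $-c\,d\,\epsilon^2$ above. This is the step where the modern sub-gaussian/sub-gamma toolkit advertised in the abstract does the real work.

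Finally I would convert the squared-norm estimate into the two-sided bound on $\|Au\|_2$ itself, using the elementary implications that $\|Au\|_2^2 \leqslant 1+\epsilon$ forces $\|Au\|_2 \leqslant 1 + \epsilon$ and $\|Au\|_2^2 \geqslant 1 - \epsilon$ forces $\|Au\|_2 \geqslant 1 - \epsilon$ (both valid for $\epsilon \in (0,1)$), and then apply the union bound over the at most $\binom{n}{2} < n^2$ difference vectors: the failure probability is at most $n^2 \cdot 2\exp(-c\,d\,\epsilon^2)$, which drops below $1$ as soon as $d = \Omega(\epsilon^{-2}\log n)$, matching the claimed $d = O(\epsilon^{-2}\log n)$. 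I expect the main obstacle to be the concentration estimate for $\|Au\|_2^2$: controlling a sum of squares of sub-gaussian entries requires handling the heavier sub-exponential tail carefully and tracking constants through the Bernstein bound, which is precisely the technical core that the sub-gamma machinery is designed to streamline.
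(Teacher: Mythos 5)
Your proposal is correct and follows essentially the same route the paper itself sketches for this lemma: the probabilistic method, reducing \eqref{eq:jl} to the distributional statement \eqref{eq:djl} for a random matrix with independent (sub-)gaussian entries, applied to the difference vectors $x-x'$ with failure probability $\delta < 1/\binom{n}{2}$ and a union bound over all pairs, yielding $d = O(\epsilon^{-2}\log n)$. The only difference is one of detail, not of route: the paper delegates the single-vector concentration step to citations and the remark that Rademacher or Gaussian entries ``do the job,'' whereas you spell out the standard sub-exponential/Bernstein argument for $\|Au\|_2^2$ --- and your square-root conversion ($(1-\epsilon)^2 \leqslant 1-\epsilon$ and $1+\epsilon \leqslant (1+\epsilon)^2$ for $\epsilon \in (0,1)$) is valid.
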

We note that the above relation of distortion $\epsilon$ to the dimension $d$ given is known to be optimal~\cite{alon2003problems,jayram2013optimal}; this however may change when extra conditions are imposed on $A$.

Results of this sort are proven by the probabilistic method. It suffices to establish the above for all pairs $(x,0)$ with high probability, when $A$ is sampled from an appropriate distribution. More precisely, the following is called \emph{Distributional JL Lemma}
\begin{align}\label{eq:djl}
 (1-\epsilon)\|x\|_2 \leqslant \| Ax \|_2 \leqslant (1+\epsilon)\|x\|_2\quad \text{w.p.}\ 1-\delta \text{ over }A\sim\mathcal{A}
\end{align}  
where $d = \mathrm{poly}(\epsilon^{-1},\log(1/\delta))$ may depend on the structure of $\mathcal{A}$. Then \Cref{eq:jl} follows by
applying the above to $x-x'$ in place of $x$ for each pair $x,x'\in\mathcal{X}$, setting $\delta < 1/\binom{n}{2}$ and taking the union bound over all pairs. As for the distribution $\mathcal{A}$ we note that already fairly simple constructions, for example $A$ filled with Rademacher or Gaussian entries do the job.

\subsection{JL Transform with Neuro-Science Constraints}

Given the wide range of applications, one is often interested in imposing additional requirements on $A$.
For example, from the algorithmic perspective it is desirable to establish sparsity of $A$~\cite{kane2014sparser}, in order to compute projections faster. In this paper we however focus on a more subtle constraint, inspired by  neuroscience and studied
in recent works~\cite{jagadeesan2019simple,allen2014sparse}.
\begin{definition}[Sparse Sign-Consistent Matrix~\cite{jagadeesan2019simple,allen2014sparse}]\label{def:matrix_structure}
A matrix is called $p$-sparse and sign-consistent when a) all but a $p$-fraction of entries in each column are zero
b) entries in the same column are of same sign.
\end{definition}
The conditions in \Cref{def:matrix_structure} and the context of JL transform are rooted in how brain operates. More specifically, 
think of $n$ input (presynaptic) neurons communicating to $d$ output (postsynaptic) neurons through synaptic connections.
First, synaptic connections are very sparse (billions of neurons but only few thousands synapses per average neuron~\cite{drachman2005we}); denote the transpose of connectivity matrix by $A$, then we obtain that $A$ is column-sparse.
. Second, a neuron triggers an action when the internal charge of the cell exceeds a certain threshold;
and the internal charge by superposition of received signals (potentials) from input neurons~\cite{purves2008neuroscience}; this aggregation of potential $x$ can be modeled by the multiplication $Ax$. Third, potentials can increase (excitatation) or decrease (inhibition) the likelihood of action, depending on the kind of released chemical (transmitter); at a given point of time we may (simplistically) assume each neuron is releasing either increasing or decreasing signal to its neighbors, which means that columns of $A$ are sign-consistent. Finally, we have empirical evidence that similarity structure of the information flowing through the network is preserved in brains of humans and animals~\cite{kiani2007object,ganguli2012compressed}; a plausible explanation seems to be exactly the low-distortion of distances guaranteed by the JL lemma, as discussed in~\cite{jagadeesan2019simple,allen2014sparse}.

Having explained the motivation, we are now ready to formulate the JL Lemma for sparse, sign-consistent matrices. The following theorem summarizes the prior works
\begin{theorem}[Sparse Sign-Consistent Distributional JL~\cite{jagadeesan2019simple,allen2014sparse}]\label{thm:djl}
For every integer $d>1$ every real numbers $\epsilon,\delta>0$, there exist a sampling distribution $\mathcal{A}$ over sparse sign-consistent matrices $d\times m$ such that \eqref{eq:djl} holds, with $d = O(\epsilon^{-2}\log^2(1/\delta))$ and sparsity $p=O(\epsilon/\log(1/\delta))$.
\end{theorem}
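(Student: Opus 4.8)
The plan is to exhibit an explicit distribution $\mathcal{A}$ and to reduce the event in \eqref{eq:djl} to the concentration of a single Rademacher quadratic form, to which the paper's Hanson-Wright variant applies. First I would fix the construction: set $k = pd$, and sample $A$ by drawing, independently for each column $j$, a uniformly random $k$-subset $S_j \subset [d]$ together with a Rademacher sign $\sigma_j \in \{\pm 1\}$, then putting $A_{ij} = \sigma_j/\sqrt{k}$ for $i \in S_j$ and $A_{ij} = 0$ otherwise. By design every column has exactly $k = pd$ nonzero entries of a common sign, so $A$ is $p$-sparse and sign-consistent in the sense of \Cref{def:matrix_structure}. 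Normalizing to $\|x\|_2 = 1$, a direct second-moment computation gives $\mathbb{E}\|Ax\|_2^2 = 1$, so it remains to establish concentration.

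Next I would expand the squared norm. Writing $\xi_{ij} = \mathbb{1}[i \in S_j]$ and separating the diagonal $j=j'$ from the off-diagonal terms,
\begin{align*}
\|Ax\|_2^2 = \frac{1}{k}\sum_j x_j^2 |S_j| + \frac{1}{k}\sum_{j \neq j'}\sigma_j\sigma_{j'} x_j x_{j'} N_{jj'}, \qquad N_{jj'} := |S_j \cap S_{j'}|.
\end{align*}
Since $|S_j| = k$ exactly, the diagonal contribution equals $\sum_j x_j^2 = 1$ deterministically, so the \emph{entire} deviation is carried by the off-diagonal term $\frac{1}{k}\sigma^\top M \sigma$, where $M$ is the hollow matrix with $M_{jj'} = x_j x_{j'} N_{jj'}$. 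The key structural point, and the reason the ordinary i.i.d.-entry analysis fails, is that sign-consistency forces the signs to be shared across rows, turning the error into a genuine quadratic form in the column signs rather than a sum of independent terms.

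Then I would condition on the sparsity patterns $\{S_j\}$, which fixes $M$, and apply the Hanson-Wright variant to $\sigma^\top M \sigma$ (whose mean is zero because $M$ is hollow), producing a sub-gaussian/sub-gamma tail of the form $\exp(-c\min(t^2/\|M\|_F^2,\, t/\|M\|_{\mathrm{op}}))$. It then remains to bound the two norms with high probability over $\{S_j\}$. For the Frobenius norm, $\|M\|_F^2 = \sum_{j\neq j'} x_j^2 x_{j'}^2 N_{jj'}^2$ is controlled by the second moments of the overlaps $N_{jj'}$, which are hypergeometric with mean $k^2/d = pk$; using $\|x\|_2 = 1$ this gives $\frac{1}{k}\|M\|_F = O(p)$. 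For the operator norm I would use that $M$ concentrates around $(pk)\,xx^\top$, whose operator norm is $pk$, so that $\frac{1}{k}\|M\|_{\mathrm{op}} = O(p)$ as well.

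I expect the operator-norm control to be the main obstacle: bounding $\|M\|_{\mathrm{op}}$ for the random overlap-weighted matrix requires a matrix-concentration or net argument together with careful control of the upper tail of the $N_{jj'}$, and it is precisely this linear (sub-gamma) term that dominates the bound. Feeding the two norm estimates into the Hanson-Wright tail and demanding that the off-diagonal deviation stay below $\epsilon$ with probability $1-\delta$ forces $p\sqrt{\log(1/\delta)} \lesssim \epsilon$ from the Frobenius term and, more stringently, $p\log(1/\delta)\lesssim \epsilon$ from the operator-norm term, which yields the sparsity bound $p = O(\epsilon/\log(1/\delta))$. Choosing $k = \Theta(\epsilon^{-1}\log(1/\delta))$ nonzeros per column, so that the expected overlap $pk = \Theta(1)$ stays bounded and the $N_{jj'}$ concentrate, and recalling $d = k/p$, then gives $d = O(\epsilon^{-2}\log^2(1/\delta))$, completing the argument.
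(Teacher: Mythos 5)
Your construction and algebraic reduction coincide with the paper's (your $k$ is the paper's $s$, your $N_{jj'}/k$ is the paper's $Q_{j,j'}$, and the diagonal likewise collapses to $1$ deterministically), but your concentration step takes a different and, at the stated parameters, unworkable route. You condition on the sparsity patterns $\{S_j\}$ and invoke the \emph{classical} Hanson--Wright bound with $\|M\|_F$ and $\|M\|_{\mathrm{op}}$, which then requires $\tfrac{1}{k}\|M\|_F=O(p)$ and $\tfrac{1}{k}\|M\|_{\mathrm{op}}=O(p)$ to hold with probability $1-\delta$ over the patterns. That is false in your regime $pk=\Theta(1)$. Take $x=(1/\sqrt{2},1/\sqrt{2},0,\dots)$: then $\|M\|_{\mathrm{op}}=N_{12}/2$ and $\|M\|_F=N_{12}/\sqrt{2}$, where $N_{12}$ is hypergeometric with mean $pk=\Theta(1)$ and Poisson-type upper tail $\Pr[N_{12}\geqslant t]\approx (Cpk/t)^t$, so its $(1-\delta)$-quantile is $\Theta\bigl(\log(1/\delta)/\log\log(1/\delta)\bigr)$, not $O(pk)$. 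Hence on any pattern-event of probability $1-\delta$ you can only assert $\tfrac{1}{k}\|M\|_{\mathrm{op}}=O\bigl(p\log(1/\delta)/\log\log(1/\delta)\bigr)$, and feeding this into the linear (sub-gamma) term of the conditional tail forces $\epsilon k\gtrsim \log^2(1/\delta)/\log\log(1/\delta)$, whereas the theorem needs $\epsilon k=\Theta(\log(1/\delta))$. The loss is intrinsic to the two-stage union bound (bad pattern plus bad signs), not to sloppy constants: the direct computation $\Pr[N_{12}>\epsilon k]\leqslant (Cpk/\epsilon k)^{\epsilon k}$ shows the true answer for this test vector is fine, but any argument that first fixes a high-probability set of patterns and then applies a uniform conditional tail over that set pays roughly an extra $\log(1/\delta)$ factor, so your plan yields $d$ exceeding $O(\epsilon^{-2}\log^2(1/\delta))$ by polylogarithmic factors. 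You correctly flagged the operator-norm step as the main obstacle, but at these parameters it is not a technical nuisance to be handled by a net or matrix-concentration argument; the overlaps simply do not concentrate at the $\delta$ level.

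The paper's \Cref{thm:our_main} is engineered precisely to bypass this: its tail depends only on $v=\sup_{\|x\|_2\leqslant 1}\max_j \mathbf{E}(\sum_{j'}Q_{j,j'}^2x_{j'}^2)^{1/2}$, an \emph{expectation} over the patterns rather than a high-probability norm bound. After decoupling, the paper conditions only on the row variables $Y_j$, applies Hoeffding to the signs (\Cref{prop:reduction}), passes to $\max_j MGF(Y_j^2)$ by Jensen (\Cref{prop:extreme}), and---the key move---bounds $\|Y_j\|_{sG}$ by convexity of the sub-gaussian norm (\Cref{prop:lin_subgauss}, via \Cref{lemma:subg_norm}), so that only the second moments $\mathbf{E}Q_{j,j'}^2\leqslant p^2+s^{-1}p(1-p)\leqslant 2p^2$ enter (\Cref{prop:simpler_norm}); Chernoff is applied once to the unconditional MGF, and the sub-gamma scale $16v=O(p)$ of \Cref{cor:tail_quadform} plays the role your $\|M\|_{\mathrm{op}}$ term was meant to play. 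To salvage your route you would have to average the MGF of the chaos over the patterns instead of splitting the failure probability---which is exactly what the paper's variant does.
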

\begin{note}
Extra tradeoff, but is it really worth doing it?
\end{note}

The remainder of this paper is dedicated to give an alternative proof of \Cref{thm:djl}. The main motivation is that the prior proofs~\cite{jagadeesan2019simple,allen2014sparse} are a) long and hard to follow, in that they either use extremely complicated combinatorics~\cite{allen2014sparse} or invoke deep third-party probability results~\cite{jagadeesan2019simple} (such as exotic bounds on moment of iid sums~\cite{latala1997estimation}, or extensions of Khintchine's inequality~\cite{hitczenko1997moment}) b) do not build on modern probability tools, in that they involve painful estimates of moments as opposed to usual proofs of JL lemmas which rely on concentration inequalities derived via MGF~\cite{vershynin2017four,boucheron2013concentration}) c) do not offer explicit constants, which makes them less usable in practice (e.g. for statistical or machine learning software).

The main source of difficulty seems to be the row-independence property which makes indeed possible to give a "few-liner" argument for the standard JL lemma~\cite{vershynin2017four,boucheron2013concentration}, but breaks in case of sign-constraint matrices. This seems to motivate the authors~\cite{jagadeesan2019simple,allen2014sparse} to take a different route and estimate the moments directly. This discussion leads to

\begin{quote}
\textbf{Challenge } Prove sparse sign-consistent JL, relying on standard estimates of MGF.
\end{quote}

%Technically speaking, the proof boils down to prove a concentration inequality for a certain quadratic form. 

\subsection{Contribution}

\subsubsection{Results}

Our main ingredient is a general result of independent interest, a version of Hanson-Wright Lemma~\cite{hanson1971bound}. This result differs from other works, in that it allows certain dependencies between entries of a matrix. It works well in case of Sparse Sign-Consistent JL Lemma.

\begin{theorem}[Version of Hanson-Wright Lemma]\label{thm:our_main}
Let $Q_{j,j'}$ be any random variables, and $\sigma_i$ be independent (also of $Q_{j,j'}$) Rademacher random variables. Consider the quadratic form
\begin{align}
E(x) = \sum_{j\not=j'} Q_{j,j'}\sigma_j\sigma_{j'}x_j x_{j'} 
\end{align}
and define
\begin{align}
v \triangleq \sup_{x:\|x\|_2\leqslant 1} \max_j \mathbf{E}(\sum_{j'} Q_{j,j'}^2 x_{j'}^2)^{1/2}.
\end{align}
then we have the following upper-tail inequality (and same for the lower tail)
\begin{align}
\Pr[E(x)>\epsilon] \leqslant \exp(-\min(\epsilon^2/128v^2,\epsilon/16v)),\quad \epsilon>0.
\end{align}
When $Q_{j,j'}$ are normalized so that $\mathbf{E}Q_{j,j'}^2\leqslant q^2$ for each $j,j'$ then we have $v\leqslant q$.
\end{theorem}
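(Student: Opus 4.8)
The plan is to bound the moment generating function $\mathbf{E}\exp(\lambda E(x))$ for fixed $x$ with $\|x\|_2\le 1$ and then optimize the Chernoff bound $\Pr[E(x)>\epsilon]\le\exp(-\lambda\epsilon)\,\mathbf{E}\exp(\lambda E(x))$. The two regimes $\epsilon^2/v^2$ and $\epsilon/v$ in the conclusion are exactly the signature of a sub-gamma MGF of the form $\log\mathbf{E}\exp(\lambda E)\lesssim \lambda^2 v^2/(1-c\lambda v)$, so the entire task reduces to establishing such an estimate; the lower tail then follows by applying the same bound to $-E(x)$, which has identical structure and the same $v$.

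First I would decouple the Rademacher chaos. Since the sum runs over off-diagonal pairs $j\ne j'$, I introduce an independent copy $\sigma'$ of $\sigma$ and replace $E(x)$ by the decoupled form $\tilde E(x)=\sum_{j,j'}Q_{j,j'}\sigma_j\sigma'_{j'}x_jx_{j'}$, at the cost of a universal constant factor. Because $\sigma$ and $\sigma'$ are independent of the $Q_{j,j'}$, this step goes through verbatim conditionally on $Q$ and admits a short self-contained proof via symmetrization and Jensen, avoiding any appeal to external decoupling theorems. Conditionally on $\sigma'$ and $Q$, the decoupled form is \emph{linear} in $\sigma$: writing $b_j=x_j\sum_{j'}Q_{j,j'}\sigma'_{j'}x_{j'}$ we have $\tilde E=\sum_j\sigma_j b_j$, a weighted sum of independent Rademachers, whose MGF is controlled by the elementary sub-gaussian estimate $\mathbf{E}_\sigma\exp(t\tilde E)\le\exp(t^2\|b\|_2^2/2)$.

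It then remains to integrate $\exp(ct^2\|b\|_2^2)$ over $\sigma'$ and $Q$, where $\|b\|_2^2=\sum_j x_j^2 Z_j^2$ with $Z_j=\sum_{j'}Q_{j,j'}\sigma'_{j'}x_{j'}$. Here I would exploit $\sum_j x_j^2\le 1$ together with convexity of the exponential, applying Jensen to the probability weights $x_j^2$ (padding to total mass one), to collapse the sum into a single coordinate: $\mathbf{E}\exp(\mu\|b\|_2^2)\le\max_j\mathbf{E}\exp(\mu Z_j^2)$. This is the clean combinatorial heart of the argument and is where $v$ enters, since $\mathbf{E}Z_j^2=\mathbf{E}\sum_{j'}Q_{j,j'}^2x_{j'}^2\le v^2$ for our unit $x$.

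The main obstacle is the final single-coordinate estimate $\mathbf{E}_{\sigma',Q}\exp(\mu Z_j^2)$. Conditionally on $Q$, the variable $Z_j$ is a Rademacher sum with variance proxy $s_j^2=\sum_{j'}Q_{j,j'}^2x_{j'}^2$, so $Z_j^2$ is sub-gamma with scale of order $s_j^2$; the difficulty is that $s_j^2$ is itself random through $Q$, so one must control the MGF of a conditionally sub-gaussian square whose proxy fluctuates. This is precisely the role of the auxiliary facts on sub-gaussian variables advertised in the introduction: they convert the sub-gaussian tail of $Z_j$ (with random proxy controlled in expectation by $v^2$ through the definition of $v$) into a sub-gamma bound $\mathbf{E}\exp(\mu Z_j^2)\le(1-cv^2\mu)^{-1/2}$ valid for $\mu$ below a threshold of order $1/v^2$. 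Assembling the pieces yields $\log\mathbf{E}\exp(\lambda E)\lesssim\lambda^2 v^2/(1-c\lambda v)$, and carefully tracking the universal constants through the decoupling factor, the sub-gaussian step, and the sub-gamma bound is what ultimately produces the explicit $128$ and $16$. The final claim $v\le q$ under $\mathbf{E}Q_{j,j'}^2\le q^2$ is then immediate by maximizing $\sum_{j'}x_{j'}^2\,\mathbf{E}Q_{j,j'}^2\le q^2\|x\|_2^2$ over the unit ball.
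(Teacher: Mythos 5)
Your outline reproduces the paper's reduction faithfully: decoupling at constant cost, conditioning so the form becomes linear in $\sigma$, Hoeffding's MGF bound, Jensen with the weights $x_j^2$ to collapse to $\max_j\mathbf{E}\exp(\mu Z_j^2)$, then a sub-gamma MGF and Chernoff --- this is exactly the chain in \Cref{cor:mgf_square_linear} and \Cref{cor:tail_quadform}. The genuine gap sits at the point you yourself flag as the main obstacle, and your proposed resolution would fail. First, the claim $\mathbf{E}Z_j^2=\mathbf{E}\sum_{j'}Q_{j,j'}^2x_{j'}^2\leqslant v^2$ misreads the definition of $v$: the expectation sits \emph{outside} the square root, so with $s_j\triangleq(\sum_{j'}Q_{j,j'}^2x_{j'}^2)^{1/2}$ the definition only gives $\mathbf{E}s_j\leqslant v$, and Jensen yields $(\mathbf{E}s_j)^2\leqslant\mathbf{E}s_j^2$ --- the inequality you need points the other way. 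Second, ``random proxy controlled in expectation'' cannot produce a bound of the form $\mathbf{E}\exp(\mu Z_j^2)\leqslant(1-cv^2\mu)^{-1/2}$: conditionally on $Q$ we have $\mathbf{E}[\exp(\mu Z_j^2)\,|\,Q]\geqslant\exp(\mu s_j^2)$ by Jensen, so taking a single off-diagonal pair with $Q_{1,2}$ heavy-tailed (Pareto with finite mean, hence finite $v$) makes $\mathbf{E}\exp(\mu Z_j^2)=\infty$ for every $\mu>0$. No first-moment control of the proxy can supply the sub-gamma estimate you invoke.

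What the paper does at this bottleneck is different in kind: it never integrates the conditional MGF of $Z_j^2$ over the proxy. Instead, \Cref{prop:lin_subgauss} asserts that $Y_j$ is \emph{unconditionally} sub-gaussian with factor $v_j=\mathbf{E}(\sum_{j'}Q_{j,j'}^2x_{j'}^2)^{1/2}$ --- precisely the expectation-outside-the-root quantity in the definition of $v$ --- by appealing to convexity of the norm $\|\cdot\|_{sG}$ from \Cref{lemma:subg_norm}; it then converts this unconditional sub-gaussianity into the MGF bound for the square via the tail-integration formula \Cref{lem:tail_integral} (this is \Cref{lemma:subgauss_square}), which assembles into $\exp(16t^2v^2/(1-16t^2v^2))$ and the constants $128$ and $16$. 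You should know, however, that the step you are missing is also the most delicate in the paper: the inequality $\|Y_j\|_{sG}\leqslant\mathbf{E}_{Q}\|\cdot\|_{sG}$ treats the law of $Y_j$ as a mixture of conditional laws, whereas norm convexity governs sums of random variables, not mixtures of distributions; the same heavy-tailed example above (where $E(x)$ has polynomial tails but $v$ is finite) shows the theorem cannot hold for arbitrary $Q_{j,j'}$, so some extra hypothesis is implicitly doing work --- in the JL application it is the boundedness $Q_{j,j'}\in[0,1]$. In short: your skeleton matches the paper, but the crux --- unconditional control of $Y_j$ compatible with $v$'s expectation-of-the-root definition --- is absent from your sketch and cannot be recovered from the second-moment argument you propose.
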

Armed with this general result we relatively easily conclude
\begin{corollary}[Sparse Sign-Consistent JL Lemma]\label{cor:our_main} 
\Cref{thm:djl} holds with parameters sparsity $p = \frac{\epsilon } {16\sqrt{2}\log(2/\delta)}$ and dimension $d = \lceil \frac{ 512 \log^2(2/\delta)}{\epsilon^2}  \rceil$.
\end{corollary}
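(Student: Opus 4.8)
The plan is to reduce the distributional guarantee \eqref{eq:djl} to a one-sided concentration estimate for $\|Ax\|_2^2$ and then invoke \Cref{thm:our_main}. By homogeneity I may assume $\|x\|_2=1$. Since $\sqrt{t}\geqslant t$ for $t\in[0,1]$ and $\sqrt{1+t}\leqslant 1+t$, the event $\{\,|\,\|Ax\|_2^2-1|\leqslant \epsilon\,\}$ forces $\sqrt{1-\epsilon}\leqslant\|Ax\|_2\leqslant\sqrt{1+\epsilon}$ and hence $(1-\epsilon)\leqslant\|Ax\|_2\leqslant(1+\epsilon)$. So it suffices to prove $\Pr[\,|\,\|Ax\|_2^2-1|>\epsilon\,]\leqslant\delta$, and the factor of $2$ inside the eventual $\log(2/\delta)$ will account for the two tails.

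First I would fix the sampling distribution $\mathcal{A}$. Draw independent column signs $\sigma_1,\dots,\sigma_m$ uniform on $\{-1,+1\}$, and for each column an independent support $S_j\subseteq[d]$ of fixed size $k=pd$, setting $A_{i,j}=\sigma_j/\sqrt{k}$ for $i\in S_j$ and $A_{i,j}=0$ otherwise. Such $A$ is visibly $p$-sparse and sign-consistent, and every column has unit Euclidean norm. Expanding the square and separating diagonal from off-diagonal contributions gives
\begin{align}
\|Ax\|_2^2=\sum_{j}\Big(\sum_i A_{i,j}^2\Big)x_j^2+\sum_{j\neq j'}\sigma_j\sigma_{j'}x_jx_{j'}\sum_i |A_{i,j}||A_{i,j'}|.
\end{align}
The first sum equals $\sum_j x_j^2=1$ because each column has unit norm, so the diagonal is deterministic. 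The second sum is exactly the form $E(x)$ of \Cref{thm:our_main} with $Q_{j,j'}=\sum_i|A_{i,j}||A_{i,j'}|=|S_j\cap S_{j'}|/k$, the Rademacher weights being the shared column signs; crucially $Q_{j,j'}$ depends only on the supports and is therefore independent of $(\sigma_j)$, as the theorem demands. Thus $\|Ax\|_2^2-1=E(x)$, and no separate budget is needed for the diagonal.

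It then remains to control $E(x)$ through the parameter $v$. A direct second-moment computation for the fixed-size supports yields $\mathbf{E}Q_{j,j'}^2=\tfrac1d+\tfrac{(k-1)^2}{d(d-1)}\leqslant \tfrac1d+p^2$, so by the final sentence of \Cref{thm:our_main} I may take $v\leqslant q$ with $q^2=\tfrac1d+p^2$. Substituting $d=\lceil512\log^2(2/\delta)/\epsilon^2\rceil$ and $p=\epsilon/(16\sqrt2\log(2/\delta))$ makes the two terms coincide, giving $q^2\leqslant \epsilon^2/(256\log^2(2/\delta))$ and hence $v\leqslant\epsilon/(16\log(2/\delta))$. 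Plugging this into the tail bound, the linear (sub-gamma) term satisfies $\epsilon/(16v)\geqslant\log(2/\delta)$ while the quadratic (sub-gaussian) term satisfies $\epsilon^2/(128v^2)\geqslant2\log^2(2/\delta)\geqslant\log(2/\delta)$; both exceed $\log(2/\delta)$, so each one-sided tail is at most $e^{-\log(2/\delta)}=\delta/2$. Adding the two tails yields $\Pr[\,|\,\|Ax\|_2^2-1|>\epsilon\,]\leqslant\delta$, which is precisely \eqref{eq:djl}.

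The \emph{genuine} difficulty is already absorbed into \Cref{thm:our_main}: it is the sign-consistency, i.e. the reuse of each $\sigma_j$ across a whole column, that destroys the row-independence exploited in textbook JL proofs and forces the dependent-entry Hanson-Wright estimate. Granting that result, the only delicate points in the corollary are arranging deterministic column norms (so the diagonal equals exactly $1$) and checking that the constants balance, so that $\tfrac1d$ and $p^2$ each consume half of $q^2$ and the sub-gamma term is the binding one. Minor bookkeeping — integrality of $k=pd$ and the ceiling in $d$, together with the mild requirement $\log(2/\delta)\geqslant\tfrac12$ guaranteeing the sub-gaussian term is non-binding — is comfortably absorbed by the slack in these inequalities.
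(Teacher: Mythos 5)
Your proposal is correct and takes essentially the same route as the paper: the same fixed-size-support sign-consistent construction, the same reduction of $\|Ax\|_2^2-1$ to the off-diagonal form $E(x)$ with $Q_{j,j'}=|S_j\cap S_{j'}|/k$ (independent of the signs), and the same invocation of \Cref{thm:our_main} after bounding $\mathbf{E}Q_{j,j'}^2\leqslant 1/d+p^2$ --- the paper reaches the identical estimate $p^2+s^{-1}p(1-p)$ by conditioning and negative correlation where you compute the hypergeometric second moment exactly. Your constant bookkeeping, including splitting the failure probability $\delta$ over the two tails to produce $\log(2/\delta)$ and checking that the sub-gamma term is the binding one, matches the paper's concluding computation.
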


\subsubsection{Proof Outline}

Before getting into details, we elaborate on techniques used in our proof of \Cref{thm:our_main}. 
The corner step is an application of the decoupling inequality, which allows us to consider a bilinear form in Rademacher variables.
Since the matrix rows are not independent, we estimate the moment generating function (MGF) conditionally on fixed values of matrix rows.
By leveraging convexity, we reduce the problem to estimating the MGF of the square of linear form, corresponding to an individual row.
Once we establish that this form is sub-gaussian, the bound follows (we utilize the tail integration formula).
The final tail bound follows by an argument similar to the one used in Bernstein's inequality (we obtain sub-gamma tails).
An outline is provided in~\Cref{fig:outline}.
We stress that we use only basic estimates on MGFs, taught in modern probability courses~\cite{vershynin2018high}, with the intent to make the result more accessible.
\begin{figure}[h!]
\centering
\begin{tikzpicture}
\node[draw,rectangle] (quad_form) {$MGF( \sum_{j,j'} Q_{j,j'}\sigma_{j}\sigma_{j'}x_j x_{j'} ) \leqslant^{?}$ };
\node[draw,rectangle,below=1cm of quad_form] (bilinear) { $MGF( \sum_{j} x_j\sigma_j \underbrace{ (\sum_{j'}Q_{j,j'}x_{j'}\sigma'_{j'} )}_{Y_j} \leqslant^{?}$ ) };
\node[draw,rectangle,below=1cm of bilinear] (squares_linear) { $MGF( \sum_j x_j^2 Y_j^2)\leqslant^{?}$ };
\node[draw,rectangle,below=1cm of squares_linear] (square_linear) { $\max_j MGF( Y_j^2) \leqslant^{?} $ };
\node[draw,rectangle,below=1cm of square_linear] (tail_linear) { $\Pr[ Y_j > \epsilon] \leqslant^{?} $ };
\node[draw,rectangle,below=1cm of tail_linear] (mgf_linear) {$\mathbf{E}Q_{j,j'}^2\leqslant^{?}$};
\draw[->] (quad_form)--(bilinear) node[midway,right] {decoupling};
\draw[->] (bilinear)--(squares_linear) node[midway,right] {row conditioning ($Y_j$), MGF of $\sigma_i$}; 
\draw[->] (squares_linear)--(square_linear) node[midway,right] {Jensen's inequality}; 
\draw[->] (square_linear)--(tail_linear) node[midway,right] {tail integration};
\draw[->] (tail_linear)--(mgf_linear) node[midway,right] {sub-gaussian properties};
\end{tikzpicture}
\caption{The proof outline. Arrows follow the reduction, boxes illustrate the proof steps.}
\label{fig:outline}
\end{figure}
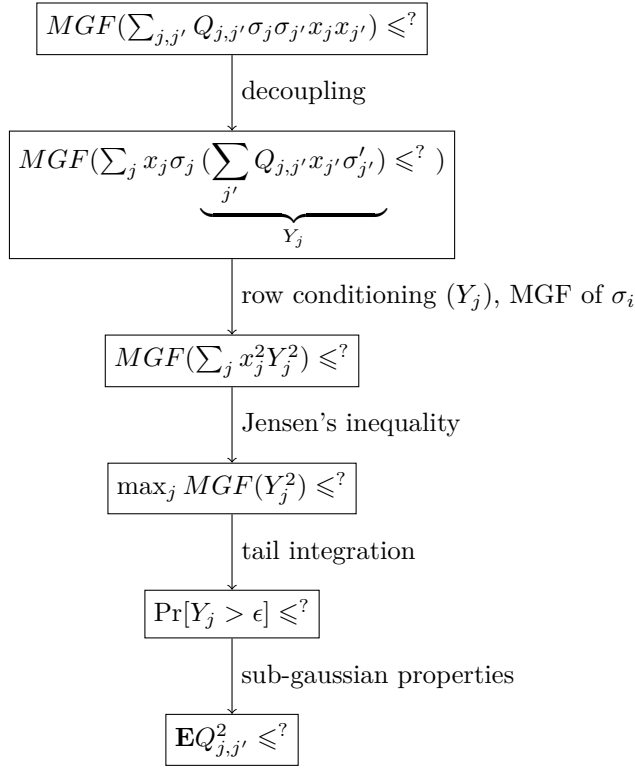

\section{Prelimaries}

We need to establish some terminology. Rademacher random variable takes values $\pm 1$ with probability $1/2$.
The moment generating function of a random variable $X$ is defined as $MGF_X(t) = \mathbf{E}\exp(tX)$. Below we prove some auxiliary results.

\subsection{Computing Expectation}

Below we present an extension of the fact used usually for moments ($h(x)=x$ or $h(x)=x^{k}$)
\begin{lemma}[Expectation by tail integration]\label{lem:tail_integral}
For a non-negative r.v. $X$ and a monotone function $h$ such that $h(0)=0$ it holds that
\begin{align*}
\mathbf{E}h(X) = \int_{0}^{\infty}  \frac{\partial h(y)}{\partial y}\cdot \mathbf{P}(X>y) \mbox{d} y.
\end{align*}
\end{lemma}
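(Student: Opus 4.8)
The plan is to reduce the claim to the fundamental theorem of calculus applied pathwise, followed by an interchange of expectation and integration justified by Tonelli's theorem. I would first treat the case where $h$ is non-decreasing, so that its derivative $h'$ is non-negative wherever it exists; this keeps every quantity in sight non-negative and sidesteps integrability concerns. The non-increasing case is symmetric (apply the argument to $-h$, or reverse the inequalities), so no generality is lost.

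First I would rewrite the random variable $h(X)$ pathwise as an integral. Fixing a realization $X=x\geqslant 0$ and using $h(0)=0$ together with the fundamental theorem of calculus, one has
\[
h(x)=\int_{0}^{x}\frac{\partial h(y)}{\partial y}\,\mathrm{d}y=\int_{0}^{\infty}\frac{\partial h(y)}{\partial y}\,\mathbf{1}\{y<x\}\,\mathrm{d}y,
\]
where the indicator is used merely to push the upper limit out to infinity. Substituting the random $X$ for $x$ and taking expectations gives
\[
\mathbf{E}h(X)=\mathbf{E}\int_{0}^{\infty}\frac{\partial h(y)}{\partial y}\,\mathbf{1}\{y<X\}\,\mathrm{d}y.
\]

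Second, I would swap the expectation and the $\mathrm{d}y$-integral. Since $h'\geqslant 0$ and the indicator is non-negative, the map $(y,\omega)\mapsto h'(y)\,\mathbf{1}\{y<X(\omega)\}$ is a non-negative jointly measurable function, so Tonelli's theorem licenses the interchange with no further integrability hypothesis:
\[
\mathbf{E}\int_{0}^{\infty}\frac{\partial h(y)}{\partial y}\,\mathbf{1}\{y<X\}\,\mathrm{d}y=\int_{0}^{\infty}\frac{\partial h(y)}{\partial y}\,\mathbf{P}(X>y)\,\mathrm{d}y,
\]
using $\mathbf{E}\,\mathbf{1}\{y<X\}=\mathbf{P}(X>y)$. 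This is exactly the asserted identity.

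The only genuinely delicate point is the pathwise step: the fundamental theorem of calculus requires $h$ to be absolutely continuous, not merely monotone, since a monotone $h$ could in principle carry a singular part invisible to $h'$. The statement's use of $\partial h/\partial y$ already presupposes this regularity, so I would simply record the standing assumption that $h$ is (locally) absolutely continuous. A secondary, harmless subtlety is the distinction between $\mathbf{P}(X>y)$ and $\mathbf{P}(X\geqslant y)$: these differ only at the at-most-countably-many atoms of $X$, hence Lebesgue-almost everywhere in $y$, so either choice yields the same integral and the orientation of the strict inequality is immaterial.
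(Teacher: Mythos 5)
Your proof is correct and takes essentially the same route as the paper's: both express $h(X)$ via the fundamental theorem of calculus as $\int_{0}^{X}\frac{\partial h(y)}{\partial y}\,\mathrm{d}y$ and then interchange the order of integration, the paper invoking Fubini on the double Stieltjes integral while you insert an indicator and invoke Tonelli. Your added remarks --- that non-negativity lets Tonelli apply without any integrability hypothesis, and that the pathwise step silently requires $h$ to be (locally) absolutely continuous rather than merely monotone --- are careful refinements of points the paper's proof glosses over.
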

\begin{proof}
Applying Fubinni's theorem to justify the change of integrals we otabin
\begin{align*}
\mathbf{E}h(X) &= \int h(x) \mbox{d}\mathbf{P}(X\leqslant x) = \int_{0}^{\infty} \int_{0}^{x}\frac{\partial h(y)}{\partial y} \mbox{d}y\,\mbox{d} \mathbf{P}(X\leqslant x) \\
& =  \int_{0}^{\infty} \frac{\partial h(y)}{\partial y} \mbox{d}y \int_{y}^{\infty}\mbox{d} \mathbf{P}(X\leqslant x) = \int_{0}^{\infty}  \frac{\partial h(y)}{\partial y}\cdot \mathbf{P}(X>y) \mbox{d} y.
\end{align*}
\end{proof}

\subsection{Sub-Gaussian Distributions}

Sub-gaussian distributions have tails lighter than gaussian and their MGFs enjoy several nice properties.
Below we discuss some of them, and for a more complete treatment refer to~\cite{vershynin2018high}.

\begin{definition}[Sub-gaussian random variables]\label{def:subgauss}
A random variable $X$ is called sub-gaussian with variance factor $v^2$, when 
$\mathbf{E}\exp(t X) \leqslant \exp(t^2v^2/2)$ for every real number $t$.
\end{definition}

\begin{lemma}[Sub-gaussian tail]\label{lemma:subgauss_tail}
If $X$ is sub-gaussian with variance factor $v^2$ then
\begin{align*}
\Pr[|X|>\epsilon] \leqslant \exp(-\epsilon^2/2v^2)
\end{align*}
\end{lemma}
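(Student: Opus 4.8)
The plan is to use the standard Chernoff (exponential Markov) method, which converts the moment generating function bound of \Cref{def:subgauss} directly into a tail bound. First I would fix any $t>0$ and apply Markov's inequality to the nonnegative random variable $\exp(tX)$:
\begin{align*}
\Pr[X>\epsilon] = \Pr[\exp(tX) > \exp(t\epsilon)] \leqslant \frac{\mathbf{E}\exp(tX)}{\exp(t\epsilon)} \leqslant \exp\left(\frac{t^2 v^2}{2} - t\epsilon\right),
\end{align*}
where the last inequality is precisely the sub-gaussian assumption.

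Next I would optimize over the free parameter $t$. The exponent $t^2 v^2/2 - t\epsilon$ is a convex quadratic in $t$, minimized at $t = \epsilon/v^2$; substituting this value collapses the exponent to $-\epsilon^2/2v^2$ and yields the one-sided bound $\Pr[X>\epsilon]\leqslant \exp(-\epsilon^2/2v^2)$.

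To pass from the upper tail to the two-sided tail on $|X|$, I would observe that $-X$ is itself sub-gaussian with the same variance factor $v^2$, since the defining inequality holds for \emph{all} real arguments: $\mathbf{E}\exp(t(-X)) = \mathbf{E}\exp((-t)X) \leqslant \exp((-t)^2 v^2/2) = \exp(t^2 v^2/2)$. Applying the upper-tail estimate to $-X$ then controls the lower tail $\Pr[X<-\epsilon]$, and combining the two events $\{X>\epsilon\}$ and $\{X<-\epsilon\}$ by a union bound gives the desired control of $\Pr[|X|>\epsilon]$.

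There is essentially no hard step here: the whole argument is a single-line Chernoff estimate followed by an elementary quadratic optimization, and it serves as the prototype for the more delicate sub-gamma tail argument used later in the paper. The only point worth flagging is that the symmetrization via union bound formally costs a factor $2$, so the genuinely sharp quantity is the one-sided bound $\exp(-\epsilon^2/2v^2)$; this constant is harmless and is absorbed into the constants appearing in all subsequent applications.
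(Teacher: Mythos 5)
Your proof is correct and follows the same route as the paper: Chernoff's method via Markov's inequality applied to $\exp(tX)$, followed by optimizing at $t=\epsilon/v^2$. You go slightly further than the paper by explicitly handling the lower tail through the sub-gaussianity of $-X$, and your observation is right that the union bound then yields $2\exp(-\epsilon^2/2v^2)$ for $\Pr[|X|>\epsilon]$ — a factor of $2$ that the lemma as stated (and the paper's own one-sided proof) silently omits, though it is harmless downstream.
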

\begin{proof}
We use Chernoff's method: by Markov's inequality $\Pr[X>\epsilon]\leqslant \exp(-t\epsilon)\mathbf{E}\exp(tX)$.
By the assumption, this is at most $\exp(t^2v^2/2 - t\epsilon)$. We optimize $t$ by choosing $t=\epsilon/v^2$.
\end{proof}

\begin{lemma}[Sub-gaussian norm]\label{lemma:subg_norm}
Given a random variable $X$ define
\begin{align*}
\|X\|_{sG} \triangleq \inf\{v>0: \mathbf{E}\exp(tX) \leqslant \exp(t^2v^2/2)\quad\text{holds for every } t\},
\end{align*}
the best constant $v$ which satisfies \Cref{def:subgauss}. 
Then we have
\begin{enumerate}[(i)]
\item $\|\cdot\|_{sG}$ is a norm; in particular $\|\sum_i X_i \|_{sG} \leqslant \sum_i\|X_i\|_{sG}$ for any $X_i$
\item we have $\|\sum_i X_i \|_{sG}^2 \leqslant \sum_i\|X_i\|_{sG}^2$ for independent $X_i$.
\end{enumerate}
\end{lemma}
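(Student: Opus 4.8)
The plan is to prove the two claimed properties of the sub-gaussian norm $\|\cdot\|_{sG}$ separately, since they rely on different mechanisms. For part (i), I would verify the three norm axioms. Absolute homogeneity, $\|\lambda X\|_{sG} = |\lambda|\,\|X\|_{sG}$, follows by substituting $t \mapsto \lambda t$ in the defining inequality $\mathbf{E}\exp(tX)\leqslant \exp(t^2 v^2/2)$ and matching coefficients. Non-negativity is immediate. The crux is the triangle inequality $\|X+Y\|_{sG}\leqslant \|X\|_{sG}+\|Y\|_{sG}$, and here the natural tool is H\"older's inequality: writing $v_1 = \|X\|_{sG}$, $v_2 = \|Y\|_{sG}$, and choosing conjugate exponents $\tfrac1p + \tfrac1q = 1$ tuned to $p = (v_1+v_2)/v_1$ and $q = (v_1+v_2)/v_2$, one gets
\begin{align*}
\mathbf{E}\exp(t(X+Y)) \leqslant \big(\mathbf{E}\exp(ptX)\big)^{1/p}\big(\mathbf{E}\exp(qtY)\big)^{1/q} \leqslant \exp\!\big(\tfrac{t^2}{2}(v_1+v_2)^2\big),
\end{align*}
where the final step uses the sub-gaussian bounds for $X$ and $Y$ at the scaled arguments $pt$ and $qt$ together with the identity $p v_1^2 + q v_2^2 = (v_1+v_2)^2$ for this choice of $p,q$. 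This exhibits $v_1+v_2$ as an admissible variance factor for $X+Y$, giving the triangle inequality; the stated multi-term version follows by induction.

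For part (ii), the additivity of squared norms under independence, I would exploit the factorization of the MGF. For independent $X_i$ with variance factors $v_i = \|X_i\|_{sG}$, independence gives
\begin{align*}
\mathbf{E}\exp\!\Big(t\sum_i X_i\Big) = \prod_i \mathbf{E}\exp(tX_i) \leqslant \prod_i \exp(t^2 v_i^2/2) = \exp\!\Big(\tfrac{t^2}{2}\sum_i v_i^2\Big),
\end{align*}
which directly shows that $\big(\sum_i v_i^2\big)^{1/2}$ is an admissible variance factor for $\sum_i X_i$, hence $\|\sum_i X_i\|_{sG}^2 \leqslant \sum_i v_i^2$.

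The main obstacle is the triangle inequality in part (i): one must justify the optimal choice of the H\"older exponents and confirm that it yields exactly the sum $v_1+v_2$ rather than a larger quantity. A subtlety worth flagging is that the infimum in the definition of $\|\cdot\|_{sG}$ may not be attained, so strictly speaking I would argue with variance factors $v_i + \eta$ for arbitrary $\eta>0$ and let $\eta\to 0$ at the end; this is routine but should be handled cleanly. I would also note at the outset that $\|X\|_{sG}=0$ forces $X$ to be almost surely constant (indeed zero-mean degenerate), so $\|\cdot\|_{sG}$ is a genuine norm on the appropriate space; I would state this without dwelling on measure-theoretic pedantry.
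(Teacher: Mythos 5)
Your proposal is correct, and part (ii) coincides with the paper's argument verbatim: independence factorizes the MGF, $\prod_i \mathbf{E}\exp(tX_i)\leqslant \prod_i\exp(t^2v_i^2/2)=\exp(t^2\sum_i v_i^2/2)$, so the variance factors add in quadrature. On part (i), however, you take a recognizably different route at the key step. The paper writes $t\sum_i X_i = t\sum_i \theta_i (X_i/\theta_i)$ with convex weights $\theta_i = v_i/\sum_j v_j$ and applies Jensen's inequality to $\exp$, bounding the MGF by the \emph{arithmetic} mean $\sum_i \theta_i\,\mathbf{E}\exp(tX_i/\theta_i)$, each term of which is at most $\exp(t^2(\sum_j v_j)^2/2)$. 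You instead factor $\exp(t(X+Y))=\exp(tX)\exp(tY)$ and apply H\"older with conjugate exponents $p=(v_1+v_2)/v_1$, $q=(v_1+v_2)/v_2$; since these are exactly the reciprocals of the paper's weights, both arguments evaluate the individual MGFs at the same rescaled arguments and land on the same bound $\exp(t^2(v_1+v_2)^2/2)$ (your identity $pv_1^2+qv_2^2=(v_1+v_2)^2$ checks out, as does conjugacy $1/p+1/q=1$). By weighted AM--GM your geometric-mean (H\"older) bound is pointwise at least as sharp as the paper's arithmetic-mean (Jensen) bound, though the final constant is identical, and neither step needs independence. The trade-off: the paper's Jensen step handles all $n$ summands in a single display, whereas you need induction or the generalized H\"older inequality with $\sum_i 1/p_i = 1$, which works just as well. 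Two of your side remarks deserve comment: the $\eta$-regularization $v_i\mapsto v_i+\eta$ is genuinely needed on your route when some $v_i=0$ (otherwise $p$ or $q$ degenerates), but the stated worry that the infimum may not be attained is moot --- the admissible set $\{v:\mathbf{E}\exp(tX)\leqslant \exp(t^2v^2/2)\ \text{for all } t\}$ is an intersection of closed sets, hence closed, so the infimum is attained; and your observation that $\|X\|_{sG}=0$ forces $X=0$ almost surely supplies the definiteness axiom, which the paper's proof of (i) silently omits (it verifies only sub-additivity and homogeneity).
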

\begin{proof}
Suppose that $\mathbf{E}\exp(tX_i) \leqslant \exp(t^2v_i^2/2)$. Define $\theta_i = v_i / \sum_j v_j$. Then we have
\begin{align*}
\mathbf{E}\exp(t (X_1+\ldots + X_n)) &=\mathbf{E}\exp(t \sum_i\theta_i \cdot (X_i/\theta_i)) \\ 
& \leqslant^{(a)} \sum_i \theta_i \mathbf{E}\exp(t X_i/\theta_i) \\
& \leqslant^{(b)} \sum_i \theta_i \exp(t^2 (\sum_i v_i)^2/2) = \exp(t^2(\sum_i v_i)^2/2)
\end{align*}
where (a) follows by Jensen's inequality and (b) by the assumption on $v_i$.
It follows that $X=\sum_iX_i$ is sub-gaussian with factor $v = \sum_i v_i$, which proves that 
$\|\cdot\|_{sG}$ is sub-additive. Since for any $\theta>0$, by definition, $\|\theta X\|_{sG} = \theta \|X\|_{sG}$, it is a norm which proves (i). 

To prove (ii) it suffices to observe that by independence and the assumption on $v_i$
\begin{align*}
\mathbf{E}\exp(t (X_1+\ldots + X_n)) \leqslant \prod_i\mathbf{E}\exp(tX_i) \leqslant \prod_i \exp(t^2v_i^2/2) = \exp(t^2\sum_iv_i^2/2)
\end{align*}
so that $\mathbf{E}\exp(t (X_1+\ldots + X_n))  \leqslant \exp(t^2v^2/2)$ with $v^2 = \sum_i v_i^2$.
\end{proof}

\begin{remark}[Simpler norm definition]
The typical textbook approach to define the norm uses a different characterization of the sub-gaussian property~\cite{vershynin2018high}, namely
$\|X\|_{Sg} \triangleq \inf\{ t: \mathbf{E}\exp(X^2/t^2) \leqslant 2 \}$ (a special case of Orlicz norms). This norm is equivalent with our definition up to a multiplicative constant, but the proof is much more tricky.
\end{remark}

\begin{lemma}[Square of sub-gaussian]\label{lemma:subgauss_square}
Let $X$ be sub-gaussian with variance factor $v^2$. Then
\begin{align*}
\mathbf{E}\exp(t X^2) \leqslant \exp\left( \frac{t\cdot 2v^2}{1- t\cdot 2v^2}\right), \quad 2 t v^2 < 1.
\end{align*}
\end{lemma}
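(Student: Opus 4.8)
The plan is to combine the sub-gaussian tail bound of \Cref{lemma:subgauss_tail} with the tail-integration identity of \Cref{lem:tail_integral}, applied to the non-negative random variable $X^2$. Throughout I assume $t>0$, the case $t\leqslant 0$ being trivial since then $\exp(tX^2)\leqslant 1$. First I would fix the test function $h(y)=\exp(ty)-1$, which satisfies $h(0)=0$ and is monotone increasing for $t>0$, so that \Cref{lem:tail_integral} applies to $X^2\geqslant 0$ and gives
\[
\mathbf{E}\exp(tX^2)-1 = \int_0^\infty t\exp(ty)\,\Pr[X^2>y]\,\mathrm{d}y .
\]
The key input is then $\Pr[X^2>y]=\Pr[|X|>\sqrt{y}]\leqslant \exp(-y/2v^2)$, which is exactly \Cref{lemma:subgauss_tail} applied with $\epsilon=\sqrt{y}$.

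Next I would substitute this tail estimate and evaluate the resulting elementary integral. Since
\[
\int_0^\infty t\exp(ty)\exp(-y/2v^2)\,\mathrm{d}y = t\int_0^\infty \exp\!\left(-y\left(\tfrac{1}{2v^2}-t\right)\right)\mathrm{d}y = \frac{2tv^2}{1-2tv^2},
\]
the integral converges precisely under the stated condition $2tv^2<1$ (this is where the hypothesis is used), yielding the additive bound $\mathbf{E}\exp(tX^2)\leqslant 1+\frac{2tv^2}{1-2tv^2}$.

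Finally I would close the argument with the elementary inequality $1+x\leqslant \exp(x)$ applied to $x=\frac{2tv^2}{1-2tv^2}\geqslant 0$, which upgrades the additive estimate to the claimed multiplicative (exponential) form. The only point deserving care — and the main obstacle — is the correct choice of the test function $h$: using $h(y)=\exp(ty)$ directly would violate the hypothesis $h(0)=0$ of \Cref{lem:tail_integral}, so the shift by $-1$ is essential, and it is precisely this shift that produces the leading $1+(\cdots)$ which we then exponentiate. Everything else is a routine convergence check and one integration.
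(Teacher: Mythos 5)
Your proof is correct and essentially the same as the paper's: the paper likewise combines \Cref{lem:tail_integral} (applied to $|X|$ with $h(x)=\exp(tx^2)-1$, which is exactly your choice after the substitution $y=x^2$) with the sub-gaussian tail of \Cref{lemma:subgauss_tail}, evaluates the same elementary integral under $2tv^2<1$, and finishes with $u\leqslant\exp(u)-1$. One minor caveat: your dismissal of $t\leqslant 0$ via $\exp(tX^2)\leqslant 1$ does not actually deliver the stated bound there (its right-hand side is strictly below $1$ for $t<0$, and indeed the inequality fails for, say, $X\equiv 0$), so the lemma should be read — as the paper implicitly does, using it only with $t:=8t^2\geqslant 0$ — as a statement for nonnegative $t$.
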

\begin{remark}[Simpler derivation] Similar results follow by somewhat painful estimation of moments. Our technique leads to a simple proof which appears to be novel (see~\cite{vershynin2018high,honorio2014tight}).
\end{remark}

\begin{proof}
By  \Cref{lem:tail_integral} applied to $|X|$ and $h(x) = \exp(tx^2)$ we have
\begin{align*}
\mathbf{E}(\exp(t X^2)-1) &= \int 2 t x\exp(t x^2)\mathbf{P}(|X|>x)\mbox{d}x \\
& \leqslant^{(a)} \int 2 t x\exp((t-1/2v^2) x^2)\mbox{d}x \\
& =^{(b)} 2t / (1/2v^2-t),\quad t < 1/2v^2 \\
&\leqslant^{(d)} \exp(2t v^2 / (1-2v^2 t))-1, \quad t < 1/2v^2.
\end{align*}
where a) is due to \Cref{lemma:subgauss_tail} and b) due to the inequality $u\leqslant \exp(u)-1$.
\end{proof}

\subsection{Sub-Gamma Distributions}

In case of sub-gamma distributions the MGF exists only up to a certain point. 
Below we review their tail behavior, referring to~\cite{boucheron2005moment} for a more exhaustive discussion.

\begin{definition}[Sub-gamma distribution]\label{def:sub-gamma}
A random variable $X$ is sub-gamma with variance factor $v^2>0$ and scale $c>0$ when
\begin{align*}
\mathbf{E}\exp(tX) \leqslant \exp\left(\frac{v^2t^2}{2(1-t c)}\right),\quad  0<|t|<1/c.
\end{align*}
\end{definition}
The following may be seen as a variant of Bernstein's inequality.
\begin{lemma}[Tails of sub-gamma distributions]\label{lemma:sugbamma_tails}
For $X$ as in \Cref{def:sub-gamma} it holds that
\begin{align}
\Pr[X>\epsilon] \leqslant \exp(-\min(\epsilon^2/4v^2,\epsilon/4c),\quad \epsilon > 0.
\end{align}
and same holds for the lower tail $\Pr[X<-\epsilon]$.
\end{lemma}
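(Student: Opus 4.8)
The plan is to use the Chernoff method, exactly as in the proof of \Cref{lemma:subgauss_tail}, followed by a careful but explicit choice of the free parameter $t$. Starting from Markov's inequality we have, for any $t\in(0,1/c)$,
\begin{align*}
\Pr[X>\epsilon] \leqslant \exp(-t\epsilon)\,\mathbf{E}\exp(tX) \leqslant \exp\left(-t\epsilon + \frac{v^2 t^2}{2(1-tc)}\right),
\end{align*}
where the second inequality invokes \Cref{def:sub-gamma}. It then remains to minimize the exponent over the admissible range of $t$.

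Rather than solving the optimization exactly (which produces an unwieldy expression involving square roots), I would substitute the near-optimal value $t = \frac{\epsilon}{v^2+c\epsilon}$, which clearly lies in $(0,1/c)$ since $tc = \frac{c\epsilon}{v^2+c\epsilon}<1$. A short computation, using $1-tc = \frac{v^2}{v^2+c\epsilon}$, collapses the two terms to
\begin{align*}
-t\epsilon + \frac{v^2 t^2}{2(1-tc)} = -\frac{\epsilon^2}{v^2+c\epsilon} + \frac{\epsilon^2}{2(v^2+c\epsilon)} = -\frac{\epsilon^2}{2(v^2+c\epsilon)},
\end{align*}
yielding the clean intermediate bound $\Pr[X>\epsilon]\leqslant\exp\left(-\frac{\epsilon^2}{2(v^2+c\epsilon)}\right)$.

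To reach the stated two-regime form, I would invoke the elementary inequality $v^2+c\epsilon \leqslant 2\max(v^2,c\epsilon)$, which gives
\begin{align*}
\frac{\epsilon^2}{2(v^2+c\epsilon)} \geqslant \frac{\epsilon^2}{4\max(v^2,c\epsilon)} = \frac{1}{4}\min\left(\frac{\epsilon^2}{v^2},\frac{\epsilon}{c}\right) = \min\left(\frac{\epsilon^2}{4v^2},\frac{\epsilon}{4c}\right),
\end{align*}
which recovers the upper-tail claim. For the lower tail I would apply the same argument to $-X$: since for $t\in(0,1/c)$ one has $\mathbf{E}\exp(-tX)\leqslant \exp\left(\frac{v^2t^2}{2(1+tc)}\right)\leqslant \exp\left(\frac{v^2t^2}{2(1-tc)}\right)$, the variable $-X$ is itself sub-gamma with the same parameters, so $\Pr[X<-\epsilon]=\Pr[-X>\epsilon]$ obeys the identical bound.

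The main obstacle is not conceptual but lies in guessing the value of $t$ that both stays inside the interval $(0,1/c)$ and makes the exponent simplify to a single ratio; the naive exact minimizer leads to messy algebra, whereas the choice above is precisely what keeps the proof short. Everything else is routine arithmetic and the elementary $\max$/$\min$ inequality.
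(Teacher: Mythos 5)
Your proof is correct, but it reaches the min-form bound by a genuinely different route than the paper. The paper first crudely dominates the MGF on the restricted range $|t|\leqslant 1/2c$, where $1-tc\geqslant 1/2$ gives $\mathbf{E}\exp(tX)\leqslant\exp(v^2t^2)$, and then runs a two-case Chernoff optimization: if the unconstrained minimizer $t=\epsilon/2v^2$ lies inside the range it yields $\exp(-\epsilon^2/4v^2)$, otherwise the boundary point $t=1/2c$ yields $\exp(-\epsilon/4c)$ --- the case split is what produces the $\min$ directly. You instead keep the full sub-gamma MGF of \Cref{def:sub-gamma}, substitute the single near-optimal Bernstein choice $t=\epsilon/(v^2+c\epsilon)$, obtain the sharper intermediate bound $\exp\bigl(-\epsilon^2/(2(v^2+c\epsilon))\bigr)$, and only then relax it via $v^2+c\epsilon\leqslant 2\max(v^2,c\epsilon)$; all of your algebra checks out ($tc<1$ holds since $v^2>0$, the exponent collapses as you state, and the $\max$/$\min$ identity is exact). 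What your route buys is notable: the intermediate bound is precisely the one the paper's remark after the lemma says requires ``more work (finding the exact solution when optimizing Chernoff's inequality)'', yet your computation shows it follows from one explicit substitution with no exact optimization, so you prove both the remark's Bernstein-type bound and the stated lemma in a single pass. What the paper's route buys is that the $\min$ structure appears without any final relaxation step, at the cost of the looser constants being baked in from the start. Your lower-tail argument also matches the paper in spirit (replace $X$ by $-X$), and your observation that $\mathbf{E}\exp(-tX)\leqslant\exp\bigl(v^2t^2/(2(1+tc))\bigr)\leqslant\exp\bigl(v^2t^2/(2(1-tc))\bigr)$ for $0<t<1/c$ fills in a detail the paper leaves implicit in its one-line closing remark.
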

\begin{remark}
With more work (finding the exact solution when optimizing Chernoff's inequality) one can prove the bound $\exp\left(-\frac{\epsilon^2}{2(v^2+b\epsilon)}\right)$.
\end{remark}
\begin{proof}
For $|t|\leqslant 1/2c$ we have $\mathbf{E}\exp(tX)\leqslant \exp(v^2 t^2)$. By Markov's inequality
$\Pr[X>\epsilon]\leqslant \exp(v^2 t^2 - t\epsilon)$, which we optimize over $t$. The global minimum is  
$t = \epsilon / 2v^2$ with the value of $\exp(-\epsilon^2/4v^2)$. When $\epsilon/2v^2 > 1/2c$, we use
$t = 1/2c$ so that $\exp(v^2t^2-t\epsilon)\leqslant \exp(t\epsilon/2-t\epsilon)=\exp(-\epsilon/4c)$. 
Replacing $X$ with $-X$ gives same for the lower tail.
\end{proof}

\section{Proof of Sparse Sign-Consistent JL Lemma}

\subsection{Reduction to Quadratic Form Concentration}

We construct the sampling distribution for matrix $A$ as in~\cite{jagadeesan2019simple}. Let $\sigma_j$ for $j=1,\ldots,n$ be independent Rademachers. Let $s \leqslant d$ be an integer, and let $\eta  = \eta_{i,j}$ be a $d\times m$ boolean matrix
chosen so that for each column $j\in [m]$ we independently select randomly $s$ out of $d$ places and set them to be $1$, declaring zero on the remaining $d-s$ entries. Now let
\begin{align*}
    A_{i,j} = \eta_{i,j} \sigma_j / \sqrt{s}
\end{align*}
By construction $A$ is $p$-sparse with $p=s/d$ and sign-consistent. Since $A$ is linear, it suffices to prove \eqref{eq:djl} for unit vectors $x$, e.g. $\|x\|_2 \triangleq (\sum_j x_j^2)^{1/2} = 1$. We have to show
\begin{align}
|E(x)|\leqslant \epsilon\ \text{w.p.}\ 1-\delta,\quad  E(x)\triangleq    \|A x\|_2^2 -1 
\end{align}
for every unit $x\in\mathbb{R}^m$. Observe that due to our assumptions (definition of $\eta$ and $x$) we have
\begin{align*}
E(x)=     \| (s^{-1/2}\sum_j \eta_{i,j}\sigma_j x_j )_i \|_2^2-1= \frac{1}{s} \sum_i \sum_{j\not=j'} \eta_{i,j}\eta_{i,j'} \sigma_j \sigma_{j'} x_j x_{j'}
\end{align*}
(diagonal cases $j=j'$ aggregated to 1). Equivalently we write (the same form as in~\cite{jagadeesan2019simple})
\begin{align*}
    E(x) = \sum_{j\not=j'} Q_{j,j'} \sigma_j\sigma_{j'} x_j x_{j'},\quad Q_{j,j'}\triangleq \frac{1}{s}\sum_{i} \eta_{i,j}\eta_{i,j'}.
\end{align*}
Note, which will be essential throughout our proof, that $Q_{j,j'}$ are independent of $\sigma_j$.

\subsection{Concluding Sparse Sign-Consistent JL Lemma}

We shall use \Cref{thm:our_main} to conclude~\Cref{cor:our_main}. Recall that
$Q_{j,j'}= \frac{1}{s}\sum_{i} \eta_{i,j}\eta_{i,j'}$. Let $I$ be the set of $i$ such that $\eta_{i,j} = 1$. Note that $|I|\leqslant s$, we have
\begin{align*}
\mathbf{E} [Q_{j,j'}^2 | I] \leqslant s^{-2}\mathbf{E}(\sum_{i\in I}\eta_{i,j'})^2
\end{align*}
By construction we have that $\eta_{i,j'}$ for different $i$ (along the $j'$-th column) are negatively correlated.
Thus we get
\begin{align*}
\mathbf{E} [Q_{j,j'}^2 | I] \leqslant s^{-2}(sp + s(s-1)p^2) = p^2 + s^{-1}p(1-p)
\end{align*}
and the same bound is valid for $\mathbf{E} Q_{j,j'}^2  $ by taking the expectation over $I$.
It follows that we can take $v^2 = 2p^2$ provided that $p \geqslant 1/s$. We need $p$ such that
$\epsilon = 16\sqrt{2} p  \log(1/\delta)$ to get $\Pr[X>\epsilon] \leqslant \delta$ and $\Pr[X<-\epsilon] \leqslant \delta$.
\Cref{cor:our_main} now follows since $d = s/p$. 

\section{Proof of Main Theorem}

\subsection{Moment Generating Function of Quadratic Form}
We aim to bound the moment generating function of $E(x)$, that is
\begin{align*}
\mathbf{E}\exp(t E(x)) =  \mathbf{E}\exp(t\sum_{j\not=j'}Q_{j,j'} x_j x_{j'} \sigma_j\sigma_{j'})
\end{align*}
The tail bound will follow by Markov's inequality and optimizing over $t$ (Chernoff's method).

\subsection{Decoupling}

By the widely known decoupling inequality for off-diagonal matrices (cf.~\cite{vershynin2011simple}) we obtain
\begin{proposition}
If $\sigma'_j$ are independent Rademachers then we have
\begin{align}
\mathbf{E}\exp(t E(x)) \leqslant \mathbf{E}\exp(4 t\sum_{j\not=j'}Q_{j,j'} x_j x_{j'} \sigma'_j\sigma_{j'})
\end{align}
\end{proposition}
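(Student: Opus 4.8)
The plan is to deduce this from the classical decoupling inequality for off-diagonal chaos, carried out conditionally on the randomness of $Q$. The statement rests only on two facts: that $z\mapsto\exp(tz)$ is convex, and that the Rademachers are centered and independent. Since $Q=(Q_{j,j'})$ is independent of the $\sigma_j$ (as stressed in the reduction), I would first condition on $Q$. For fixed $Q$ the quantity $E(x)=\sum_{j\neq j'}Q_{j,j'}x_jx_{j'}\sigma_j\sigma_{j'}$ is an ordinary Rademacher chaos with \emph{deterministic} coefficients $a_{j,j'}\triangleq Q_{j,j'}x_jx_{j'}$, so it suffices to prove the bound in this conditional setting and then integrate over $Q$ at the end.

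For the conditional chaos I would use the random-partition trick. Let $\delta_j$ be i.i.d.\ $\mathrm{Bernoulli}(1/2)$ selectors, independent of everything else, and set $T=\{j:\delta_j=1\}$. For $j\neq j'$ one has $\mathbf{E}_\delta[\delta_j(1-\delta_{j'})]=\frac{1}{4}$, whence
\[
\sum_{j\neq j'} a_{j,j'}\sigma_j\sigma_{j'} = 4\,\mathbf{E}_\delta\sum_{j\neq j'}\delta_j(1-\delta_{j'})\,a_{j,j'}\sigma_j\sigma_{j'}.
\]
Applying Jensen's inequality to $z\mapsto\exp(tz)$, conditionally on the $\sigma$'s, pulls the average over $\delta$ to the outside:
\[
\mathbf{E}_\sigma\exp\Big(t\sum_{j\neq j'} a_{j,j'}\sigma_j\sigma_{j'}\Big)\leqslant \mathbf{E}_\delta\,\mathbf{E}_\sigma\exp\Big(4t\sum_{j\neq j'}\delta_j(1-\delta_{j'})\,a_{j,j'}\sigma_j\sigma_{j'}\Big).
\]
Fixing a realization of $\delta$, the inner sum runs over pairs with $j\in T$, $j'\notin T$, so its first factor uses only $\{\sigma_j:j\in T\}$ and its second only $\{\sigma_{j'}:j'\notin T\}$: two disjoint, hence independent, blocks. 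I would then replace the first block by an independent copy $\{\sigma'_j\}$ without altering the distribution, turning the inner sum into $\sum_{j\in T,\,j'\notin T}a_{j,j'}\sigma'_j\sigma_{j'}$.

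The delicate and decisive step is restoring the full off-diagonal sum. I would recover $\sum_{j\neq j'}a_{j,j'}\sigma'_j\sigma_{j'}$ by a second use of Jensen: freeze the surviving variables $\{\sigma'_j:j\in T\}$ and $\{\sigma_{j'}:j'\notin T\}$ and average over the rest, namely $\{\sigma'_j:j\notin T\}$ and $\{\sigma_{j'}:j'\in T\}$. Every pair $(j,j')$ not of the form $(j\in T,\,j'\notin T)$ contributes a term containing a factor that is being averaged and is centered, so its conditional expectation vanishes; the restricted sum is therefore exactly the conditional expectation of the full sum, and convexity of $\exp$ gives
\[
\mathbf{E}\exp\Big(4t\sum_{j\in T,\,j'\notin T}a_{j,j'}\sigma'_j\sigma_{j'}\Big)\leqslant \mathbf{E}\exp\Big(4t\sum_{j\neq j'}a_{j,j'}\sigma'_j\sigma_{j'}\Big).
\]
The right-hand side no longer depends on $\delta$, so averaging over $\delta$ and then integrating over $Q$ (recalling $a_{j,j'}=Q_{j,j'}x_jx_{j'}$) yields the proposition. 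I expect this filling-in step to be the main obstacle: one must carefully track which Rademachers are frozen versus integrated out, and invoke the mean-zero property of the Rademachers together with convexity in precisely the right order. The independence of $Q$ from $\sigma$ is exactly what licenses treating the $a_{j,j'}$ as constants throughout the argument.
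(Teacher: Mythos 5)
Your proof is correct and follows essentially the same route as the paper: condition on $Q=(Q_{j,j'})$ (using its independence from the $\sigma_j$), decouple the off-diagonal Rademacher chaos via convexity of $z\mapsto\exp(tz)$, and integrate over $Q$ at the end. The only difference is that the paper invokes the decoupling inequality as a black box (citing~\cite{vershynin2011simple}), whereas you reprove it inline by the standard Bernoulli-selector argument --- random partition, swap in an independent copy on one block, restore the full sum by conditional Jensen using the mean-zero Rademachers --- which is precisely the proof in that reference, so the substance coincides.
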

\begin{proof}
The claim follows by the decoupling result applied to the convex function $x\to \exp(t x)$ and the quadratic form $E(x)$ conditionally on $Q_{j,j'}$ (which is independent of $\sigma_{j}$). This shows
\begin{align*}
\mathbf{E}[\exp(t E(x)|Q_{j,j'}]) \leqslant \mathbf{E}[\exp(4 t\sum_{j\not=j'}Q_{j,j'} x_j x_{j'} \sigma'_j\sigma_{j'})| (Q_{j,j'})_{j,j'}]
\end{align*}
and the result follows by taking the expectation over all $Q_{j,j'}$. 
\end{proof}

\subsection{Reduction to Linear Form}

We proceed further by rewriting the decoupled form as the double sum
\begin{align}\label{eq:double_sum}
\sum_{j\not=j'}Q_{j,j'} x_j x_{j'} \sigma'_j\sigma_{j'} = \sum_{j} Y_{j} x_{j}\sigma_{j},\quad  Y_{j} \triangleq \sum_{j'} x_{j'} Q_{j,j'} \sigma'_{j'} 
\end{align}

\begin{proposition}\label{prop:reduction}
For $Y_j$ defined as in \Cref{eq:double_sum} we have
\begin{align}
\mathbf{E} \exp\left(t\sum_{j} Y_{j} x_{j} \sigma_{j}\right)  \leqslant \mathbf{E}_{}\exp(t^2/2 \cdot \sum_j x_j^2 Y_j^2 )
\end{align}
\end{proposition}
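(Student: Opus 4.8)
The plan is to condition on the auxiliary randomness and exploit that the form is \emph{linear} in the independent Rademachers $\sigma_j$. The crucial structural fact is that $Y_j = \sum_{j'} x_{j'} Q_{j,j'}\sigma'_{j'}$ depends only on $(Q_{j,j'})$ and the decoupled Rademachers $(\sigma'_{j'})$, and is therefore independent of the family $(\sigma_j)_j$. Accordingly, I would first freeze $(Y_j)_j$ and take the inner expectation over $(\sigma_j)_j$ alone, treating the coefficients $t Y_j x_j$ as constants.

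Conditionally on $(Y_j)_j$, the exponent $t\sum_j Y_j x_j \sigma_j$ is a linear combination of independent Rademachers. By independence the conditional MGF factorizes, and since $\mathbf{E}\exp(a\sigma_j)=\cosh(a)$ for a Rademacher $\sigma_j$, I obtain
\begin{align*}
\mathbf{E}\left[\exp\left(t\sum_j Y_j x_j \sigma_j\right)\,\Big|\,(Y_j)_j\right] = \prod_j \cosh(t\, Y_j x_j).
\end{align*}
The next step is the elementary estimate $\cosh(u)\leqslant \exp(u^2/2)$, valid for every real $u$; it follows by comparing Taylor coefficients, using $(2k)!\geqslant 2^k k!$ (the $k$ factors $k+1,\dots,2k$ are each at least $2$). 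Applying it termwise gives
\begin{align*}
\prod_j \cosh(t\, Y_j x_j) \leqslant \prod_j \exp\left(\tfrac{t^2}{2} x_j^2 Y_j^2\right) = \exp\left(\tfrac{t^2}{2}\sum_j x_j^2 Y_j^2\right).
\end{align*}
Finally I would take the expectation over the remaining randomness $(Y_j)_j$ and invoke the tower property, which yields exactly the claimed bound.

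I do not expect a genuine obstacle here: the whole argument rests on the independence of $(\sigma_j)_j$ from $(Y_j)_j$ — the property emphasized right after the definition of $Q_{j,j'}$ — together with the standard sub-gaussian bound for a single Rademacher. The only point that needs care is conditioning on the right variables: one integrates out $(\sigma_j)_j$ while holding $(Q_{j,j'})$ and $(\sigma'_{j'})$ fixed, so that the coefficients $Y_j x_j$ are genuinely deterministic in the inner expectation and the factorization is legitimate.
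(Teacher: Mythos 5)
Your proof is correct and matches the paper's argument: both condition on $(Y_j)_j$ (using independence of the $\sigma_j$ from $Y_j$), factorize the conditional MGF over the independent Rademachers, bound each factor by $\exp(t^2 x_j^2 Y_j^2/2)$, and finish by the tower property. The only cosmetic difference is that you prove the single-Rademacher bound $\cosh(u)\leqslant \exp(u^2/2)$ directly via Taylor coefficients, whereas the paper cites it as Hoeffding's inequality — the same ingredient either way.
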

\begin{proof}
We have the following chain of estimates
\begin{align*}
\mathbf{E}\left[ \left. \exp\left(t\sum_{j} Y_{j} x_{j} \sigma_{j}\right) \right| Y_j \right] & \leqslant^{(a)} \mathbf{E}_{(\sigma_j)_j}\left. \left[ 
\exp(t\sum_j Y_j x_j \sigma_j) \right| Y_j \right] \\ 
&=^{(b)} \prod_j \mathbf{E}_{\sigma_j} \exp(t x_j Y_j \sigma_j) \\
& \leqslant^{(c)} \prod_j \exp(t^2 x_j^2 Y_j^2 / 2) = \exp(t^2/2\cdot \sum_j x_j^2  Y_j^2)
\end{align*}
Here (a) follows from the fact that when conditioning on fixed value of $Y_j$ the only remained randomness is that of $\sigma_j$ ( indepedence of $Y_j$). Equality (b) follows as $\sigma_j$ are independent, and (c) because of Hoeffding's inequality.
Finally we take expectation over $Y_j$.
\end{proof}

\begin{proposition}\label{prop:extreme}
For any (possibly correlated) r.vs $Y_j$ and unit vector $x$ it holds that
\begin{align}
\mathbf{E}\exp(t^2/2 \cdot \sum_j x_j^2 Y_j^2) \leqslant \max_j \mathbf{E}\exp(t^2/2\cdot Y_j^2)
\end{align}
\end{proposition}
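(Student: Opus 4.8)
The plan is to exploit the fact that $x$ is a unit vector, so that the coefficients $x_j^2$ form a probability distribution: $\sum_j x_j^2 = 1$ with each $x_j^2 \geqslant 0$. This turns $\sum_j x_j^2 Y_j^2$ into a \emph{convex combination} of the quantities $Y_j^2$, which is exactly the structure needed to invoke Jensen's inequality. The correlation allowed among the $Y_j$ will turn out to be irrelevant, since the argument never requires factorizing any expectation.

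First I would fix an arbitrary realization of the (possibly correlated) random variables $(Y_j)_j$ and apply Jensen's inequality to the convex function $u \mapsto \exp((t^2/2)\, u)$ with the weights $x_j^2$. Since convexity gives $\phi(\sum_j w_j a_j) \leqslant \sum_j w_j \phi(a_j)$ whenever $\sum_j w_j = 1$ and $w_j \geqslant 0$, this yields the pointwise (deterministic) bound
\begin{align*}
\exp\Bigl(\tfrac{t^2}{2}\sum_j x_j^2 Y_j^2\Bigr) \leqslant \sum_j x_j^2 \exp\Bigl(\tfrac{t^2}{2} Y_j^2\Bigr).
\end{align*}

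Next I would take expectations of both sides. Linearity of expectation moves $\mathbf{E}$ inside the finite sum, giving $\mathbf{E}\exp((t^2/2)\sum_j x_j^2 Y_j^2) \leqslant \sum_j x_j^2\, \mathbf{E}\exp((t^2/2) Y_j^2)$. Bounding each term by the maximum over $j$ and then using $\sum_j x_j^2 = 1$ collapses the right-hand side to $\max_j \mathbf{E}\exp((t^2/2) Y_j^2)$, which is precisely the claim.

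The reassuring point is that there is essentially no genuine obstacle here. The step that might look delicate, namely the correlation among the $Y_j$, is completely sidestepped, because Jensen's inequality is applied deterministically for each outcome \emph{before} any expectation is taken, so no independence assumption is ever invoked. The only care needed is to verify that the weights $x_j^2$ genuinely sum to one, which is guaranteed exactly by the normalization $\|x\|_2 = 1$; this is where the unit-vector hypothesis is used and is the single place the argument could fail if it were dropped.
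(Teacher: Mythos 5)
Your proof is correct and matches the paper's argument exactly: both apply Jensen's inequality to the convex function $u \mapsto \exp((t^2/2)u)$ with weights $x_j^2$ (valid since $\|x\|_2 = 1$), then bound the resulting convex combination of expectations by their maximum. Your version is if anything slightly more explicit than the paper's, in spelling out that Jensen is applied pointwise before taking expectations, which is indeed why correlations among the $Y_j$ are harmless.
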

\begin{proof}
We have
\begin{align}
\mathbf{E}\exp(t^2/2 \cdot \sum_j x_j^2 Y_j^2) & \leqslant^{(a)}  \sum_{j} x_j^2 \mathbf{E}\exp(t^2/2\cdot Y_j) \\
& \leqslant^{(b)} \max_j \mathbf{E}\exp(t^2 Y_j^2/2)
\end{align}
where (a) follows by Jensen's inequality with weights $x_j^2$ (they are valid weights due to the assumption $\|x\|_2=1$) and (b) follows because $\sum_j x_j^2=1$.
\end{proof}
This discussion can be summarized as follows
\begin{corollary}\label{cor:mgf_square_linear} We have the following bound
\begin{align}
\mathbf{E}\exp(t E(x)) \leqslant \max_j\mathbf{E}\exp(8 t^2 Y_j^2)
\end{align}
\end{corollary}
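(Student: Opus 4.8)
The plan is to simply chain the four ingredients already assembled, tracking the constant carefully. First I would invoke the decoupling Proposition to pass from the quadratic form $E(x)$ to the decoupled bilinear form, paying the factor $4$ in the exponent:
\[
\mathbf{E}\exp(tE(x)) \leqslant \mathbf{E}\exp\!\Big(4t\sum_{j\neq j'} Q_{j,j'}x_j x_{j'}\sigma'_j\sigma_{j'}\Big).
\]
Then, using the algebraic identity \eqref{eq:double_sum}, I would rewrite the right-hand side as $\mathbf{E}\exp(4t\sum_j Y_j x_j \sigma_j)$, which is exactly the object handled by \Cref{prop:reduction}.

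Next I would apply \Cref{prop:reduction}. Since that statement holds for an arbitrary real parameter, I may substitute $4t$ for $t$ there; the variance-type coefficient then becomes $(4t)^2/2 = 8t^2$, giving
\[
\mathbf{E}\exp\!\Big(4t\sum_j Y_j x_j\sigma_j\Big) \leqslant \mathbf{E}\exp\!\Big(8t^2\sum_j x_j^2 Y_j^2\Big).
\]
Finally I would invoke \Cref{prop:extreme} to replace the weighted sum of squares by its worst coordinate. The statement there is phrased with the specific coefficient $t^2/2$, but its proof only uses that the weights $x_j^2$ form a convex combination (Jensen together with $\sum_j x_j^2 = 1$); hence it holds verbatim with any nonnegative multiplier, in particular $8t^2$. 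This yields $\mathbf{E}\exp(8t^2\sum_j x_j^2 Y_j^2)\leqslant \max_j\mathbf{E}\exp(8t^2 Y_j^2)$, and composing the three displayed bounds gives \Cref{cor:mgf_square_linear}.

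There is no genuine analytic difficulty here; the corollary is purely a matter of composition and bookkeeping. The one point demanding care—and the thing I would flag as the main potential pitfall—is the propagation of the constant: the decoupling factor $4$ enters the reduction step as $4t$ and is then squared and halved, so that $8t^2 = (4t)^2/2$ is exactly the coefficient appearing in the corollary. I would also make explicit that \Cref{prop:extreme} is being applied with parameter $8t^2$ rather than the $t^2/2$ literally written in its statement, so that the reader sees this generalization follows immediately from the stated proof rather than from the stated form.
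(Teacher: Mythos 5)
Your proposal is correct and follows exactly the route the paper intends: the corollary is stated there as a summary of the preceding chain (decoupling with factor $4$, then \Cref{prop:reduction} applied with $4t$ giving $(4t)^2/2 = 8t^2$, then \Cref{prop:extreme} with that coefficient). Your explicit remarks on constant propagation and on \Cref{prop:extreme} holding for any nonnegative multiplier are exactly the bookkeeping the paper leaves implicit.
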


\subsection{Sub-Gaussianity of Linear Form}\label{sec:sub-gauss_linear_form}

In view of \Cref{cor:mgf_square_linear} we are left with the task of upper-bounding $\mathbf{E}\exp(t^2 Y_j^2)$. 
To this end we estimate the MGF and hence the tail of $Y_j$.

\begin{proposition}[Sub-gaussian norm]\label{prop:lin_subgauss}
For every $j$ define $v_j=\mathbf{E}(\sum_{j'} Q_{j,j'}^2 x_{j'}^2)^{1/2}$, then
\begin{align*}
\mathbf{E}\exp(t Y_j) \leqslant \exp(t^2 v_j^2/2), \quad t\in\mathbb{R}.
\end{align*}
\end{proposition}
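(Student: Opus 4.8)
The plan is to freeze the randomness carried by the weights $Q_{j,j'}$ and thereby reduce $Y_j$ to an ordinary weighted sum of independent signs, for which sub-gaussianity is textbook. Concretely, I would condition on the whole family $(Q_{j,j'})_{j'}$. Given these values, $Y_j=\sum_{j'}(x_{j'}Q_{j,j'})\sigma'_{j'}$ is a linear combination of the independent Rademachers $\sigma'_{j'}$ with frozen coefficients $x_{j'}Q_{j,j'}$, and each summand is a constant multiple of a Rademacher, hence sub-gaussian with variance factor $x_{j'}^2Q_{j,j'}^2$ (the elementary bound $\mathbf{E}\exp(t\sigma)=\cosh t\leqslant\exp(t^2/2)$, i.e. Hoeffding, as already used in \Cref{prop:reduction}).

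Next I would assemble these one-dimensional estimates. Because the summands are conditionally independent, multiplying their conditional MGFs — equivalently applying part (ii) of \Cref{lemma:subg_norm} conditionally — yields the conditional sub-gaussian bound $\mathbf{E}[\exp(tY_j)\mid Q]\leqslant\exp\bigl(\tfrac{t^2}{2}\,S\bigr)$ with the (random) variance factor $S\triangleq\sum_{j'}x_{j'}^2Q_{j,j'}^2$, so that conditionally $\|Y_j\mid Q\|_{sG}\leqslant S^{1/2}$, whose expectation is exactly the claimed quantity $v_j=\mathbf{E}\,S^{1/2}$. Up to this point everything is routine and makes no use of independence across rows.

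The genuine difficulty is the removal of the conditioning. Taking the expectation over $Q$ turns the target into $\mathbf{E}\exp(\tfrac{t^2}{2}S)\leqslant\exp(\tfrac{t^2}{2}(\mathbf{E}S^{1/2})^2)$, and here Jensen's inequality runs in the unfavourable direction, so a naive averaging of the conditional MGFs does not suffice. The plan is therefore to pass through the norm rather than the MGF: viewing $\|\cdot\|_{sG}$ as a genuine norm (\Cref{lemma:subg_norm}(i)), I would try to bound $\|Y_j\|_{sG}$ by an integral Minkowski average of the conditional norms $S^{1/2}$, aiming for $\|Y_j\|_{sG}\leqslant\mathbf{E}\,S^{1/2}=v_j$. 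Making this averaging step legitimate — that is, controlling the fluctuations of the conditional parameter $S^{1/2}$ and converting the conditional estimate into an unconditional one with the deterministic factor $v_j$ — is precisely the point at which the absent row-independence bites, and I expect it to be the technical heart of the proof.
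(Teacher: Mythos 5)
Your conditional step is fine and coincides with the paper's: freezing $(Q_{j,j'})_{j'}$ and applying Hoeffding to the independent Rademachers (formally \Cref{lemma:subg_norm}(ii)) gives $\mathbf{E}[\exp(tY_j)\mid (Q_{j,j'})_{j'}]\leqslant\exp(t^2S/2)$ with $S\triangleq\sum_{j'}x_{j'}^2Q_{j,j'}^2$. But the step you leave open — deconditioning to get the factor $v_j=\mathbf{E}S^{1/2}$ — is not merely the technical heart, it is unattainable: since $\mathbf{E}Y_j=0$, expanding any bound $\mathbf{E}\exp(tY_j)\leqslant\exp(t^2v^2/2)$ around $t=0$ forces $v^2\geqslant\mathbf{E}Y_j^2=\mathbf{E}S$, whereas $v_j^2=(\mathbf{E}S^{1/2})^2\leqslant\mathbf{E}S$ by Jensen, strictly so whenever $S$ is non-degenerate — which it is in the JL instantiation, where $Q_{j,j'}$ is genuinely random. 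Already the one-term toy case $Y=Q\sigma'$ with $Q$ uniform on $\{0,1\}$ has $\mathbf{E}S^{1/2}=1/2$ but $\mathbf{E}Y^2=1/2$, so no variance factor below $1/\sqrt{2}$ can work. A mixture of sub-gaussians is not sub-gaussian with the averaged parameter, and your proposed Minkowski-type averaging $\|Y_j\|_{sG}\leqslant\mathbf{E}\,S^{1/2}$ is exactly the false inequality.

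You should know that the paper's own proof closes the gap by asserting precisely the step you balked at: its chain opens with $\|Y_j\|_{sG}\leqslant\mathbf{E}_{(Q_{j,j'})_{j'}}$ of the conditional sub-gaussian norms, justified only by the remark that $\|\cdot\|_{sG}$ ``is a norm and hence convex.'' Convexity of a norm gives $\|\mathbf{E}X\|\leqslant\mathbf{E}\|X\|$ for averages of the \emph{variable}, not the claimed convexity of the sub-gaussian parameter under mixtures of \emph{laws}, and by the variance argument above the asserted inequality is false in general. So your attempt stalls exactly where the paper's argument is invalid: you did not miss an idea that the paper supplies; you correctly located a defect in the statement itself (the proof's last line and the Jensen step in \Cref{prop:simpler_norm} confirm the intended reading is $\mathbf{E}$ \emph{outside} the square root). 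A repaired statement would need at least $v_j^2=\mathbf{E}\sum_{j'}Q_{j,j'}^2x_{j'}^2$, and even then sub-gaussianity of $Y_j$ does not follow from averaging alone — one must control the MGF of the random parameter $S$ (e.g.\ via an almost-sure bound or a sub-gamma estimate) rather than decondition a sub-gaussian constant, with corresponding changes propagating to the definition of $v$ in \Cref{thm:our_main}.
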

\begin{proof}
For every $j$ we have
\begin{align*}
\|   Y_j  \|_{sG} & \leqslant^{(a)} \mathbf{E}_{(Q_{j,j'})_{j'}} \|  \mathbf{E}[ Y_j | (Q_{j,j'})_{j'}] \|_{sG} \\
& =^{(b)} \mathbf{E}_{(Q_{j,j'})_{j'}}   \| \mathbf{E}[ \sum_{j'} Q_{j,j'} x_{j'} \sigma'_{j'} | (Q_{j,j'})_{j'}]  \|_{sG} \\
& =^{(c)} \mathbf{E}_{(Q_{j,j'})_{j'}}   ( \sum_j x_j^2 Q_{j,j'}^2 \| \sigma'_{j'}  \|_{\psi_2})^{1/2} \\
& =^{(d)} \mathbf{E} \left(\sum_{j'} x_{j'}^2 Q_{j,j'}^2\right)^{1/2} 
\end{align*}
Here (a) follows because t$\|\cdot\|_{sG}$ is a norm and hence convex. Equality (b) uses the explicit form of $Y_{j}$.
Then (c) holds due to~\Cref{lemma:subg_norm} because conditioned on $Q_{j,j'}$ for all $j'$ we are left with linear combination of independent Rademachers $\sigma'_j$. Then (d) holds because the $\|\cdot\|_{sG}$ norm of a Rademacher distribution is bounded by 1.
\end{proof}

\subsection{Bounding MGF of Quadratic Form}

Define $v \triangleq \sup_{x:\|x\|_2\leqslant 1} \max_j \mathbf{E}(\sum_{j'} Q_{j,j'}^2 x_{j'}^2)^{1/2}$
as in \Cref{thm:our_main}. We clearly have $v_j \leqslant v$ where $v_j$ are defined in \Cref{prop:lin_subgauss}.
Due to \Cref{cor:mgf_square_linear} we obtain
\begin{proposition}\label{eq:mgf_quadform}
Let $v$ be as above, then
\begin{align*}
\mathbf{E}\exp(t E(x)) \leqslant \exp\left(\frac{ t^2\cdot 16v^2}{1- t^2\cdot 16 v^2}\right), \quad 4 t v < 1.
\end{align*}
\end{proposition}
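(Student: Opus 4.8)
The plan is to chain together the three ingredients already in place: the reduction of Corollary~\ref{cor:mgf_square_linear}, the sub-gaussian estimate of Proposition~\ref{prop:lin_subgauss}, and the squaring bound of Lemma~\ref{lemma:subgauss_square}. No new probabilistic idea is required; all the substance lives in these earlier results, and the task here is to combine them and keep track of constants.

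First I would recall from Corollary~\ref{cor:mgf_square_linear} that the MGF of the quadratic form is dominated by the worst squared linear form,
\[
\mathbf{E}\exp(t E(x)) \leqslant \max_j \mathbf{E}\exp(8 t^2 Y_j^2).
\]
Next, Proposition~\ref{prop:lin_subgauss} shows each $Y_j$ is sub-gaussian with variance factor $v_j^2$, and by construction $v_j \leqslant v$. I would then apply Lemma~\ref{lemma:subgauss_square} to $Y_j$, with the role of the MGF parameter in that lemma played by $8t^2$, which gives
\[
\mathbf{E}\exp(8 t^2 Y_j^2) \leqslant \exp\!\left(\frac{8t^2\cdot 2 v_j^2}{1- 8t^2\cdot 2 v_j^2}\right) = \exp\!\left(\frac{16 t^2 v_j^2}{1- 16 t^2 v_j^2}\right),
\]
valid in the regime $2\cdot(8t^2)\cdot v_j^2 < 1$, i.e. $16 t^2 v_j^2 < 1$.

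Finally, to eliminate the dependence on $j$ and express the bound through the global quantity $v$, I would use that $u \mapsto u/(1-u)$ is increasing on $[0,1)$. Since $v_j^2 \leqslant v^2$, replacing $v_j$ by $v$ only enlarges the exponent, provided the admissibility condition $16 t^2 v^2 < 1$ holds; this condition is precisely $4tv<1$ after taking square roots, and it automatically entails $16 t^2 v_j^2 < 1$. As the resulting bound no longer depends on $j$, taking the maximum over $j$ yields
\[
\mathbf{E}\exp(t E(x)) \leqslant \exp\!\left(\frac{16 t^2 v^2}{1- 16 t^2 v^2}\right), \quad 4tv<1,
\]
which is the assertion.

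The only real obstacle is bookkeeping: correctly propagating the factor $8t^2$ through Lemma~\ref{lemma:subgauss_square} to produce the coefficient $16$, and verifying that the validity range $2\cdot(8t^2)\cdot v_j^2<1$ is consistent with the stated condition $4tv<1$ once one passes from $v_j$ to $v$. The monotonicity step must be applied in the correct direction, but beyond that the argument is routine.
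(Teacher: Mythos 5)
Your proposal is correct and matches the paper's proof essentially verbatim: both chain Corollary~\ref{cor:mgf_square_linear}, the sub-gaussianity of $Y_j$ from Proposition~\ref{prop:lin_subgauss}, and Lemma~\ref{lemma:subgauss_square} with $t$ replaced by $8t^2$. The only cosmetic difference is that the paper upgrades $v_j$ to $v$ before invoking the squaring lemma (sub-gaussian with factor $v_j$ implies sub-gaussian with factor $v\geqslant v_j$), whereas you apply the lemma with $v_j$ and then use monotonicity of $u\mapsto u/(1-u)$ --- the same argument in a different order.
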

\begin{proof}
As noticed $Y_j$ are sub-gaussian with factor $v$.
\Cref{lemma:subgauss_square} with $t$ replaced by $8t^2$ gives
\begin{align*}
\mathbf{E}\exp(8t^2 Y_j^2) \leqslant \exp\left(\frac{ t^2\cdot 16 v^2}{1- t^2\cdot 16 v^2}\right), \quad 16 t^2 v^2 < 1.
\end{align*}
The claim now follows by \Cref{cor:mgf_square_linear}.
\end{proof}
In some cases (as our version of JL Lemma) the bound for $v$ simplifies even further
\begin{proposition}\label{prop:simpler_norm}
If $Q_{j,j'}$ are normalized so that $\mathbf{E}Q_{j,j'}^2 \leqslant q^2$ for all $j,j'$ then $v^2 \leqslant q^2$.
\end{proposition}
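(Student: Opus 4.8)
The plan is to bound the quantity inside the supremum directly, for each fixed unit vector $x$ and each index $j$, and only afterwards take the maximum over $j$ and the supremum over $x$. The core observation is that the square root is concave, so Jensen's inequality pushes the expectation inside in the favorable direction.

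First I would apply Jensen's inequality to the concave map $z \mapsto z^{1/2}$, evaluated at the non-negative random variable $\sum_{j'} Q_{j,j'}^2 x_{j'}^2$, and then use linearity of expectation (the coefficients $x_{j'}^2$ are deterministic):
\begin{align*}
\mathbf{E}\left(\sum_{j'} Q_{j,j'}^2 x_{j'}^2\right)^{1/2} \leqslant \left(\mathbf{E}\sum_{j'} Q_{j,j'}^2 x_{j'}^2\right)^{1/2} = \left(\sum_{j'} \mathbf{E}[Q_{j,j'}^2]\, x_{j'}^2\right)^{1/2}.
\end{align*}

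Next I would invoke the normalization hypothesis $\mathbf{E}Q_{j,j'}^2 \leqslant q^2$ termwise, followed by the unit-norm constraint $\sum_{j'} x_{j'}^2 = \|x\|_2^2 \leqslant 1$, to obtain
\begin{align*}
\left(\sum_{j'} \mathbf{E}[Q_{j,j'}^2]\, x_{j'}^2\right)^{1/2} \leqslant \left(q^2 \sum_{j'} x_{j'}^2\right)^{1/2} = q\,\|x\|_2 \leqslant q.
\end{align*}
Since this estimate holds uniformly over all $j$ and all $x$ with $\|x\|_2 \leqslant 1$, taking the maximum over $j$ and the supremum over such $x$ yields $v \leqslant q$, hence $v^2 \leqslant q^2$.

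I do not anticipate a genuine obstacle here: the only subtlety is getting the direction of Jensen's inequality right, and this is exactly the favorable one, since concavity of the square root gives $\mathbf{E}\sqrt{Z} \leqslant \sqrt{\mathbf{E}Z}$. Everything else reduces to linearity of expectation and the two defining constraints.
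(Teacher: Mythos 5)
Your proposal is correct and follows essentially the same route as the paper: Jensen's inequality for the concave map $u \mapsto u^{1/2}$ to push the expectation inside the square root, then linearity of expectation, the termwise bound $\mathbf{E}Q_{j,j'}^2 \leqslant q^2$, and the constraint $\sum_{j'} x_{j'}^2 \leqslant 1$. Your write-up is if anything slightly more careful than the paper's, since you make explicit the final step of taking the maximum over $j$ and the supremum over $x$.
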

\begin{proof}
By Jensen's inequality applied to $u\to u^{1/2}$ (concave!)
we have $\mathbf{E}(\sum_{j'} Q_{j,j'}^2 x_{j'}^2)^{1/2} \leqslant (\sum_{j'} \mathbf{E} Q_{j,j'}^2 x_{j'}^2)^{1/2}$.
The result now follows because $\mathbf{E}Q_{j,j'}^2\leqslant q^2$ and $\sum_{j'}x_{j'}^2=1$.
\end{proof}

\subsection{Bounding Tail of Quadratic Form}

Having bounded the MGF we easily obtain the tail bound for $E(x)$.

\begin{corollary}\label{cor:tail_quadform}
We have that $E(x)$ is sub-gamma with parameters $(32v^2)^{1/2}$ and $4v$. In particular we have the tail of
\begin{align}
\Pr[E(x)>\epsilon]\leqslant \exp(-\min(\epsilon^2/ 128 v^2,\epsilon/16 v))
\end{align}
\end{corollary}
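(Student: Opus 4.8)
The plan is to read off the MGF bound of \Cref{eq:mgf_quadform} as an instance of the sub-gamma template in \Cref{def:sub-gamma}, and then apply \Cref{lemma:sugbamma_tails} mechanically. First I would recall that \Cref{eq:mgf_quadform} gives $\mathbf{E}\exp(tE(x))\leqslant\exp\!\big(\tfrac{16v^2t^2}{1-16v^2t^2}\big)$ on the range $4tv<1$. The only cosmetic mismatch with \Cref{def:sub-gamma} is that the denominator here is quadratic in $t$, whereas the sub-gamma form carries the linear term $1-tc$.

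To reconcile the two I would use the factorization $1-16v^2t^2=(1-4vt)(1+4vt)$. For $t\in(0,1/(4v))$ the factor $1+4vt$ is at least $1$, hence $1-16v^2t^2\geqslant 1-4vt$, and therefore
\[
\frac{16v^2t^2}{1-16v^2t^2}\;\leqslant\;\frac{16v^2t^2}{1-4vt}\;=\;\frac{32v^2t^2}{2(1-4vt)}.
\]
Comparing the right-hand side against \Cref{def:sub-gamma}, this is precisely the sub-gamma bound with variance factor $V^2=32v^2$ (so $V=(32v^2)^{1/2}$) and scale $c=4v$, valid on $0<t<1/c$, which matches the stated parameters. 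Feeding $V^2=32v^2$ and $c=4v$ into \Cref{lemma:sugbamma_tails}, the exponent $-\min\!\big(\epsilon^2/(4V^2),\,\epsilon/(4c)\big)$ collapses to $-\min\!\big(\epsilon^2/(128v^2),\,\epsilon/(16v)\big)$, giving exactly the claimed upper tail.

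For the lower tail I would exploit that the bound in \Cref{eq:mgf_quadform} depends on $t$ only through $t^2$, since it descends from $\max_j\mathbf{E}\exp(8t^2Y_j^2)$ in \Cref{cor:mgf_square_linear}. Consequently the identical bound holds for $\mathbf{E}\exp(t(-E(x)))$, so $-E(x)$ is sub-gamma with the same parameters, and applying the upper-tail half of \Cref{lemma:sugbamma_tails} to $-E(x)$ yields $\Pr[E(x)<-\epsilon]$ with the same estimate.

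I do not expect a genuine obstacle here; the argument is essentially bookkeeping. The one place deserving care is the factor of two: rewriting $16v^2t^2/(1-4vt)$ as $32v^2t^2/(2(1-4vt))$ to absorb the $\tfrac12$ built into \Cref{def:sub-gamma}, which is what forces $V^2=32v^2$ rather than $16v^2$. The only other subtlety worth stating explicitly is the symmetry step for the lower tail, since \Cref{def:sub-gamma} is not symmetric in $t$ and one therefore passes to $-E(x)$ rather than substituting a negative $t$ directly.
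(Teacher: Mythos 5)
Your proposal is correct and follows essentially the same route as the paper: both derive $1-16v^2t^2\geqslant 1-4vt$ for $0<t<1/(4v)$ (the paper via $16t^2v^2=(4tv)^2<4tv$, you via the factorization $(1-4vt)(1+4vt)$, which is the same observation), identify the sub-gamma parameters $(32v^2)^{1/2}$ and $4v$ against \Cref{def:sub-gamma}, and invoke \Cref{lemma:sugbamma_tails}. Your explicit treatment of the lower tail via the $t^2$-symmetry of the MGF bound is a welcome extra care that the paper leaves implicit in the ``same for the lower tail'' clause of \Cref{lemma:sugbamma_tails}.
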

\begin{proof}
The bound \Cref{eq:mgf_quadform} can be further upper-bonded by $\exp\left(\frac{ t^2\cdot 16v^2}{1- t\cdot 4 v}\right)$
because $4 t v < 1$ implies $16 t^2 v^2 < 4tv$. By \Cref{def:sub-gamma} we conclude that $E(x)$ is sub-gamma with parameters
$v:= (32v^2)^{1/2}$ and $c:=4v$. The tail bound follows by \Cref{lemma:sugbamma_tails}.
\end{proof}

\begin{corollary}[Concluding main result]
\Cref{thm:our_main} holds.
\end{corollary}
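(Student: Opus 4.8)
The plan is to assemble the chain of reductions established in the preceding subsections; no new analytic estimate is needed, only careful bookkeeping of constants. First I would recall \Cref{cor:mgf_square_linear}, which has already reduced the moment generating function of the quadratic form to that of a single squared linear form, $\mathbf{E}\exp(tE(x))\leqslant \max_j \mathbf{E}\exp(8t^2 Y_j^2)$; this is the box into which the decoupling inequality (contributing the factor $4$), the row conditioning on $Y_j$ together with Hoeffding's bound for the $\sigma_j$, and Jensen's inequality with weights $x_j^2$ all feed.

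Next I would invoke \Cref{prop:lin_subgauss}, which shows that each linear form $Y_j$ is sub-gaussian with variance factor $v_j\leqslant v$. Applying \Cref{lemma:subgauss_square} (with its parameter $t$ replaced by $8t^2$) then yields the quadratic-form bound of \Cref{eq:mgf_quadform}, valid for $4tv<1$. Feeding this into \Cref{cor:tail_quadform} identifies $E(x)$ as sub-gamma with parameters $(32v^2)^{1/2}$ and $4v$, and \Cref{lemma:sugbamma_tails} then delivers exactly the stated upper tail $\Pr[E(x)>\epsilon]\leqslant \exp(-\min(\epsilon^2/128v^2,\epsilon/16v))$.

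The lower tail is the one point deserving explicit comment. Since the bound of \Cref{eq:mgf_quadform} depends on $t$ only through $t^2$, it is an even function of $t$ and hence holds two-sidedly for $0<|t|<1/4v$; thus $E(x)$ is sub-gamma in the full sense of \Cref{def:sub-gamma}, and \Cref{lemma:sugbamma_tails}, which records both tails, yields the same estimate for $\Pr[E(x)<-\epsilon]$. Finally, the simplified conclusion $v\leqslant q$ under the normalization $\mathbf{E}Q_{j,j'}^2\leqslant q^2$ is precisely \Cref{prop:simpler_norm}, which closes the theorem.

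Because every ingredient is already proved, I do not anticipate a genuine obstacle. The only thing to watch is that the numerical constants propagate correctly through the chain: the factor $4$ introduced by decoupling, the substitution $t\mapsto 8t^2$ in the square-of-sub-gaussian step, and the relaxation $16t^2 v^2<4tv$ (valid when $4tv<1$) used to pass from the sub-gaussian-square form to the cleaner sub-gamma form. Verifying that these combine to give the advertised constants $128$ and $16$ is the whole content of the final assembly.
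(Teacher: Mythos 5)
Your proposal is correct and follows the same route as the paper, whose proof of this corollary is simply to cite \Cref{cor:tail_quadform} together with \Cref{prop:simpler_norm}; your constant-tracking ($4tv<1$, the substitution $t\mapsto 8t^2$, and $16t^2v^2<4tv$ yielding $128$ and $16$) matches the assembly already carried out in those results. Your explicit observation that the MGF bound of \Cref{eq:mgf_quadform} is even in $t$, so the lower tail follows from the two-sided statement of \Cref{lemma:sugbamma_tails}, is a detail the paper leaves implicit but is entirely consistent with its argument.
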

\begin{proof}
Follows directly by \Cref{cor:tail_quadform} and \Cref{prop:simpler_norm} (the simpler bound for $v$).
\end{proof}

\section{Conclusion}

We have discussed a simpler proof of JL Lemma with neuroscience-based constraints.
The proof uses only basic estimates on moment generating functions (sub-gaussian and sub-gamma type), and offers explicit constant.

\bibliography{citations}

\end{document}